\newcolumntype{C}[1]{>{\centering\arraybackslash}m{#1}}
\newtheorem{theorem}{Theorem}
\numberwithin{equation}{section}
\newtheorem{remark}{Remark}
\journal{CMAME}
\begin{document}

\begin{frontmatter}

\title{A boundary penalization technique to remove outliers from isogeometric analysis on tensor-product meshes}


\author[adq]{Quanling Deng\corref{corr}}
\author[ad,ad2]{Victor M. Calo}



\cortext[corr]{Corresponding author. \\ E-mail addresses: Quanling.Deng@math.wisc.edu; Victor.Calo@curtin.edu.au}


\address[adq]{Department of Mathematics, University of Wisconsin--Madison, Madison, WI 53706, USA}

\address[ad]{
School of Electrical Engineering, Computing and Mathematical Sciences, Curtin University, Perth, WA 6845, Australia}

\address[ad2]{Mineral Resources, Commonwealth Scientific and Industrial Research Organisation (CSIRO), Kensington, Perth, WA 6152, Australia}


\begin{abstract}
 We introduce a boundary penalization technique to improve the spectral approximation of isogeometric analysis (IGA). The method removes the outliers appearing in the high-frequency region of the approximate spectrum when using the $C^{p-1}, p$-th ($p\ge3$) order isogeometric elements. We focus on the classical Laplacian (Dirichlet) eigenvalue problem in 1D to illustrate the idea and then use the tensor-product structure to generate the stiffness and mass matrices for multidimensional problems. To remove the outliers, we penalize the higher-order derivatives from both the solution and test spaces at the domain boundary. Intuitively, we construct a better approximation by weakly imposing features of the exact solution. Effectively, we add terms to the variational formulation at the boundaries with minimal extra-computational cost. We then generalize the idea to remove the outliers for the isogeometric analysis of the Neumann eigenvalue problem (for $p\ge2$). The boundary penalization does not change the test and solution spaces. In the limiting case, when the penalty goes to infinity, we perform the dispersion analysis of $C^2$ cubic elements for the Dirichlet eigenvalue problem and $C^1$ quadratic elements for the Neumann eigenvalue problem. We obtain the analytical eigenpairs for the resulting matrix eigenvalue problems. Numerical experiments show optimal convergence rates for the eigenvalues and eigenfunctions of the discrete operator.
\end{abstract}

\begin{keyword}
 isogeometric analysis \sep spectral error  \sep eigenvalue \sep outlier \sep boundary penalization \sep dispersion analysis
\end{keyword}

\end{frontmatter}


\section{Introduction} \label{sec:intr}

Isogeometric analysis is a Galerkin finite element method, introduced in 2005~\cite{ hughes2005isogeometric, cottrell2009isogeometric}. The method is popular and efficient.  The primary motivation for its introduction was to unify the classical finite element analysis with computer-aided design and analysis tools. Isogeometric analysis uses basis functions with higher-order continuity (smoother), for example, the B-splines or non-uniform rational basis splines (NURBS). This alternative set of basis functions improves the numerical approximations of various partial differential equations.

Cottrell et al.~\cite{ cottrell2006isogeometric} studied with isogeometric analysis structural vibrations (an eigenvalue problem) and compared the spectral approximation properties of classical finite elements against isogeometric analysis for the first time. The isogeometric elements improved the accuracy of the spectral approximation significantly in the entire discrete spectrum.  Maximum continuity basis functions remove the optical branch from the discrete spectrum of the finite element approximations. Nevertheless, a thin layer of outliers appears in the high-frequency range of the spectrum. Since then, our community has been seeking a method to remove the outliers.

There is rich literature in this line of work; see~\cite{ hughes2008duality, hughes2014finite, bartovn2018generalization, calo2019dispersion, calo2017quadrature, deng2018dispersion, puzyrev2017dispersion, puzyrev2018spectral, deng2018ddm} and references therein. In~\cite{ hughes2014finite}, the authors further explored the advantages of isogeometric analysis over finite elements on the spectral approximation. They used the Pythagorean eigenvalue error theorem (see a proof in~\cite{ strang1973analysis}) to relate the eigenvalue and eigenfunction errors. A generalization of this theorem that accounts for inexact quadrature in the inner products associated with the stiffness and mass matrices was developed in~\cite{ puzyrev2017dispersion} and this generalized theorem was applied to develop the optimally-blended quadrature rules. The work~\cite{ marfurt1984accuracy, ainsworth2010optimally} developed quadrature blending rules to reduce the dispersion errors for finite element analysis of the Helmholtz equation. In~\cite{ calo2019dispersion, puzyrev2017dispersion}, the authors generalized the idea to isogeometric elements using the dispersion analysis and reduced the spectral errors for the differential eigenvalue problems. The introduction of a single quadrature rule reduced the computational cost for the blended rules ~\cite{ deng2018dispersion} for $C^1$ quadratic isogeometric elements. The paper~\cite{ puzyrev2018spectral} studied the error reductions, stopping bands in the finite element numerical spectrum, and outliers in the isogeometric spectrum with variable continuities.  Also, error reductions for other differential operators are possible~\cite{ deng2019optimal, deng2018isogeometric}. This sequence of results delivers superconvergent eigenvalue errors. However, the outliers still pollute the high-frequencies of the spectrum, particularly in multiple dimensions.

In this paper, we propose a boundary penalization technique to remove the outliers from the isogeometric spectrum. We consider the spectral approximation of the Laplace operator. We first present the idea in 1D and then generalize the construction to 2D and 3D using tensor-product discretizations.  We penalize high-order derivatives near the boundary to remove the outliers. Intuitively, we add to the variational form near the boundary terms that build the numerical method features of the exact solution. These terms weakly impose the differential equation $-\Delta u = \lambda u$ and its higher-order derivatives, $(-\Delta)^\alpha u = \lambda^\alpha u$, at the domain boundaries. These terms assume high regularity for the eigenmodes. We only use $(-\Delta)^\alpha u = \lambda^\alpha u$ for the degrees of freedom near the boundary nodes. We impose these conditions weakly by penalization. We show that this technique removes the outliers in the numerical spectrum approximated by smooth isogeometric elements. As the penalty goes to infinity, we impose the condition $(-\Delta)^\alpha u = \lambda^\alpha u$ more strongly. In the limiting case, the penalty on $(-\Delta)^\alpha u = \lambda^\alpha u$ at the boundaries becomes an extra condition for the matrix system. For $C^2$ cubic elements for the Dirichlet eigenvalue problem and $C^1$ quadratic elements for the Neumann eigenvalue problem, based on the work~\cite{{deng2021}}, we perform the dispersion analysis (which unifies with the spectrum analysis in~\cite{ hughes2008duality}) near the boundary and give exact eigenpairs for the resulting matrix problems. For higher-order elements, a reconstruction of the basis functions near the boundaries could lead to exact eigenpairs of the matrix problems. In~\cite{Sande:2019} the authors proposed the strong imposition of higher-order derivatives for 1D problems to remove outliers, although they did not discuss the extension for multidimensional problems. However, this is not the focus of this work; we will report these results in future work.

The structure of the paper is the following.  Section~\ref{sec:ps} presents the problem and its Galerkin discretization. In Section~\ref{sec:miIgA}, we discuss how to remove the outliers for isogeometric analysis. Section~\ref{sec:nep} seeks to remove the outliers from the isogeometric analysis of the Neumann eigenvalue problem. In Section~\ref{sec:ana}, we perform the dispersion analysis for the nodes near the boundaries and give exact eigenpairs of the resulting matrix eigenvalue problems.  Section~\ref{sec:num} collects numerical examples that demonstrate the performance of the proposed ideas.  We present concluding remarks in Section~\ref{sec:conclusion}.

\section{Problem setting: Dirichlet eigenvalue problem} \label{sec:ps}

The classical second-order Dirichlet eigenvalue problem reads: find the eigenpairs $(\lambda, u)\in \mathbb{R}^+\times H^1_0(\Omega)$ with $\|u\|_\Omega=1$ such that 
\begin{equation} \label{eq:pde}
\begin{aligned}
- \Delta u & =  \lambda u \quad  \text{in} \quad \Omega, \\
u & = 0 \quad \text{on} \quad \partial \Omega,
\end{aligned}
\end{equation}
where  $\Delta = \nabla^2$ is the Laplacian, $\Omega = [0,1]^d \subset \mathbb{R}^d, d=1,2,3$, is a bounded open domain with Lipschitz boundary. The eigenvalue problem~\eqref{eq:pde} has a countable set of eigenvalues $\lambda_j \in \mathbb{R}^+$ (see, for example,~\cite[Sec. 9.8]{ Brezis:11})
\begin{equation*}
0 < \lambda_1 < \lambda_2 \leq \lambda_3 \leq \cdots
\end{equation*}
and an associated set of orthonormal eigenfunctions $u_j$, that is
\begin{equation} \label{eq:onu}
(u_j, u_k) = \delta_{jk}, 
\end{equation}
where $(\cdot, \cdot)$ denotes the $L^2-$inner product on $\Omega$ and $\delta_{lm} =1$ when $l=m$ and zero otherwise (Kronecker delta).

\subsection{Galerkin discretization}

To discretize~\eqref{eq:pde} with finite and isogeometric elements, for simplicity, we first assume that $ \Omega$ is a cube and we use a uniform tensor product mesh of size $h_x>0, h_y>0, h_z>0$ on $\Omega$. We denote each element as $E$ and its collection as $\mathcal{T}_h$ such that $\bar\Omega = \cup_{E\in \mathcal{T}_h} E$. Let $h = \max_{E\in \mathcal{T}_h} \text{diameter}(E)$.  The variational formulation of~\eqref{eq:pde} at the continuous level is to find $\lambda \in \mathbb{R}^{+}$ and $u \in H^1_0(\Omega)$ such that 
\begin{equation} \label{eq:vf}
a(w, u) =  \lambda b(w, u), \quad \forall \ w \in H^1_0(\Omega), 
\end{equation}
where $ a(w, v) = (\nabla w, \nabla v) $ and $ b(w, v) = (w, v) $.  Herein, we use the standard notation for the Hilbert and Sobolev spaces. We denote by $(\cdot,\cdot)$ and $\| \cdot \|$ the $L^2$-inner product and its norm, respectively.  From~\eqref{eq:onu}, the normalized eigenfunctions are also orthogonal in the energy inner product
\begin{equation} \label{eq:vfo}
a(u_j, u_k) =  \lambda_j b(u_j, u_k) = \lambda_j \delta_{jk}.
\end{equation}

We specify a finite dimensional approximation space $V^h_p \subset H^1_0(\Omega)$ where $V^h_p = \text{span} \{\theta^j_p\}$ is the span of the B-spline basis functions $\theta^j_p$. The definition of the B-spline basis functions in 1D is as follows.  Let $X = \{x_0, x_1, \cdots, x_m \}$ be a knot vector with knots $x_j$, which is a non-decreasing sequence of real numbers.  The $j$-th B-spline basis function of degree $p$, denoted as $\theta^j_p(x)$, is defined as~\cite{ de1978practical, piegl2012nurbs}
\begin{equation} \label{eq:Bspline}
\begin{aligned}
\theta^j_0(x) & = 
\begin{cases}
1, \quad \text{if} \ x_j \le x < x_{j+1} \\
0, \quad \text{otherwise} \\
\end{cases} \\ 
\theta^j_p(x) & = \frac{x - x_j}{x_{j+p} - x_j} \theta^j_{p-1}(x) + \frac{x_{j+p+1} - x}{x_{j+p+1} - x_{j+1}} \theta^{j+1}_{p-1}(x).
\end{aligned}
\end{equation}
The span of these basis functions generates a finite-dimensional subspace of the $H^1(\Omega)$ (see~\cite{ buffa2010isogeometric, evans2013isogeometric} for details):
\begin{equation} \label{eq:bs}
V^h_p = \text{span} \{ \Theta_j^p \}_{j=1}^{N_h} = 
\begin{cases}
 \text{span} \{ \theta^{j_x}_{p_x}(x) \}_{j_x=1}^{N_x}, & \text{in 1D}, \\
 \text{span} \{ \theta^{j_x}_{p_x}(x) \theta^{j_y}_{p_y}(y) \}_{j_x, j_y=1}^{N_x, N_y}, & \text{in 2D}, \\
 \text{span} \{\theta^{j_x}_{p_x}(x) \theta^{j_y}_{p_y}(y) \theta^{j_z}_{p_z}(z) \}_{j_x, j_y, j_z=1}^{N_x, N_y,N_z}, & \text{in 3D}, \\
\end{cases}
\end{equation}
where $p_x, p_y, p_z$ specify the approximation order in each dimension, respectively. $N_x, N_y, N_z$ are the total numbers of basis functions in each dimension, and $N_h$ is the total number of degrees of freedom.

The isogeometric analysis of~\eqref{eq:pde} seeks $\lambda^h \in \mathbb{R}^+$ and $u^h \in V^h_p$ such that 
\begin{equation} \label{eq:vfh}
a(w^h, u^h) =  \lambda^h b(w^h, u^h), \quad \forall \ w^h \in V^h_p.
\end{equation} 
We approximate the eigenfunctions as a linear combination of the B-spline basis functions and substitute all the B-spline basis functions for $w_h$ in~\eqref{eq:vfh}. This approximation leads to the following matrix eigenvalue problem
\begin{equation} \label{eq:mevp}
K U = \lambda^h M U,
\end{equation}
where $K_{kl} =  a(\theta_p^k, \theta_p^l), M_{kl} = b(\theta_p^k, \theta_p^l),$ and $U$ is the corresponding representation of the eigenvector as the coefficients of the B-spline basis functions. When we restrict the continuity of the B-splines to $C^0$, the method becomes a finite element with Bernstein basis functions.

\section{Boundary penalization technique to remove outliers} \label{sec:miIgA}

We consider $C^{p-1}, p$-th order isogeometric elements. Spectral error outliers start to appear when using $C^2$ cubic isogeometric elements for the Dirichlet eigenvalue problem. For the Neumann eigenvalue problem, they start to appear when using $C^1$ quadratic isogeometric elements.  We first describe the key idea, and then we present the idea using modified bilinear forms in 1D.

The motivating idea stems from the use of high-order numerical time-marching schemes for time-dependent problems. To illustrate the idea, we consider the wave equation $u_{tt}(x,t) - \Delta u(x,t) = f(x,t)$ with an initial solution and velocity. For a sufficiently regular solution $u$ at $x=0$, we can approximate the initial acceleration as $\Delta u(x,0) + f(x,0)$. This consistency condition allows us to compute the initial acceleration for the time integration schemes, for example, the HHT-$\alpha$~\cite{ hilber1977improved} and generalized-$\alpha$~\cite{ chung1993time} methods. Similarly, one can determine the initial higher-order time-derivative values for the heat equation by taking time-derivatives when the solution is regular enough. This class of consistency conditions is common for initializing many higher-order time-marching methods.  

Now, we borrow the idea for the eigenvalue problem~\eqref{eq:pde}. Consequently, we expect that the function $-\Delta u$ also vanishes at the boundary. Figure~\ref{fig:iga1dp3p4bf} shows the $C^2$ cubic and $C^3$ quartic splines with their second derivatives. Imposing the homogeneous Dirichlet boundary conditions removes the first and last basis functions. However, there are still two basis functions that have non-zero second derivatives at each boundary. Thus, we modify the discrete system to impose the second derivatives' homogeneity (zero value) weakly. 

\begin{figure}[ht]
\centering
\includegraphics[height=5.1cm]{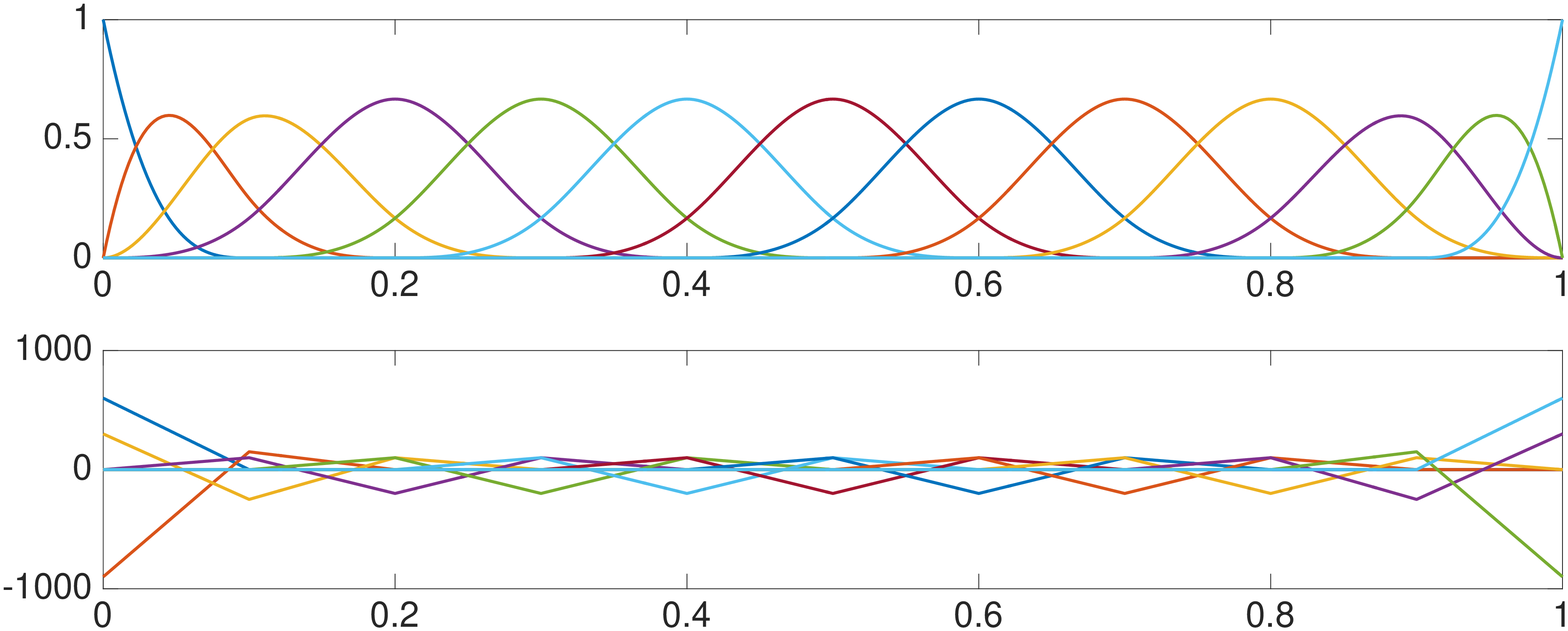} \\
\includegraphics[height=5.1cm]{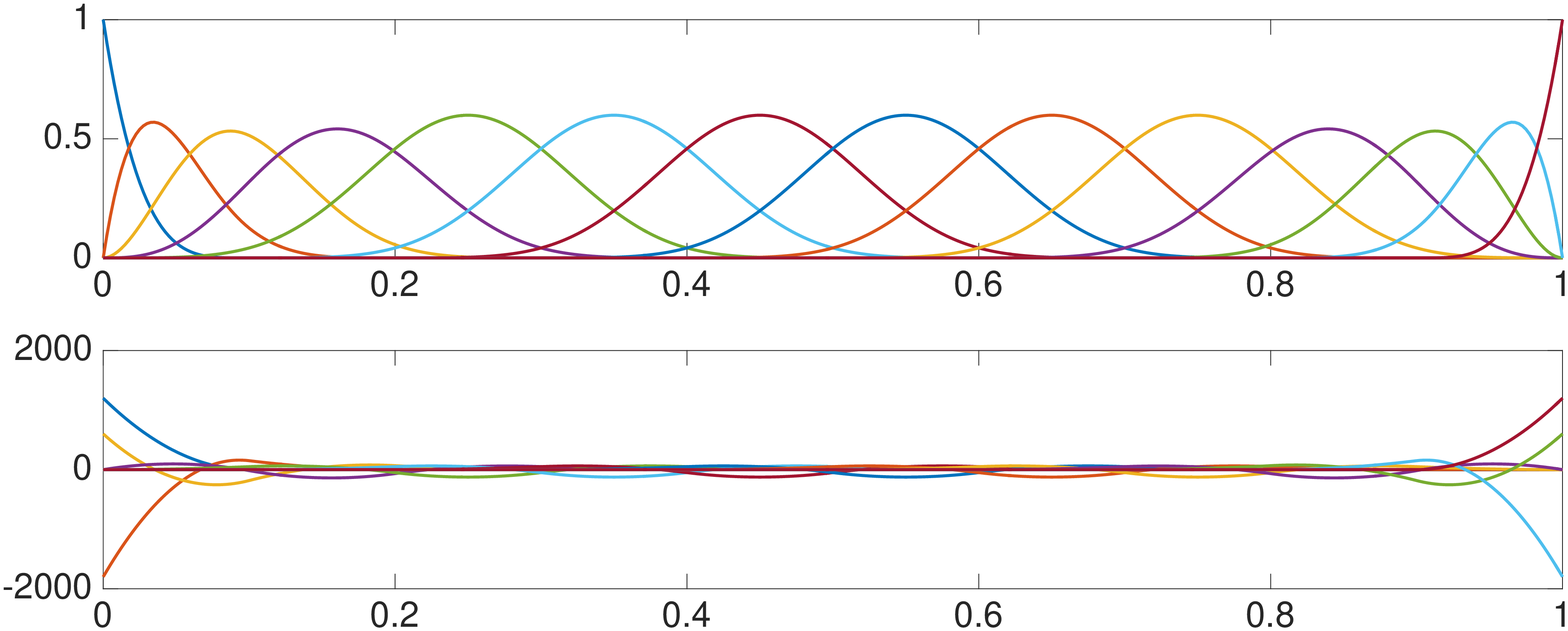} 
\vspace{-0.7cm}
\caption{$C^2$ cubic and $C^3$ quartic splines with their second derivatives.}
\label{fig:iga1dp3p4bf}
\end{figure}

For $C^2$ cubic elements, we remove the outliers from the isogeometric approximations by penalizing the Laplacian of the approximate solution at the boundaries.  For quintic $C^4$ and sextic $C^5$ isogeometric elements, there are two more outliers in the spectrum. Applying a similar argument, we obtain that the bi-Laplacian of the function also has to be zero on the boundary. Thus, we remove these additional outliers by penalizing the higher-order derivatives of the approximate solutions at the boundaries. This procedure extends to higher-order elements by following a similar construction. Herein, we remark that the extra regularity we require from the solution in these weak constraints is of the same order as the one required by operator splitting schemes that are typically used on tensor-product discretizations to accelerate the computation (see, for example,~\cite{ vabishchevich2013additive, behnoudfar2020variationally}). 

Firstly, we denote
\begin{equation}
\alpha = \lfloor \frac{p-1}{2} \rfloor =
\begin{cases}
\frac{p-1}{2}, & p \quad \text{is odd}, \\
\frac{p-2}{2}, & p \quad \text{is even}. \\
\end{cases}
\end{equation}
We present the idea in 1D. We obtain the 1D matrix problem and then apply the tensor-product structure to construct an approximate matrix problem for multidimensional problems. 

In 1D, we consider the problem~\eqref{eq:pde} on the unit interval $\Omega = [0, 1]$. We propose the boundary penalization to the isogeometric analysis of~\eqref{eq:pde} in 1D as: Find $\tilde \lambda^h \in \mathbb{R}$ and $\tilde u^h \in V^h_p$ such that 
\begin{equation} \label{eq:dcvfh}
  \tilde a(w^h, \tilde u^h) =  \tilde\lambda^h b(w^h, \tilde u^h), \quad \forall \ w^h \in V^h_p,
\end{equation} 
where for $w, v \in V^h_p$
\begin{subequations} \label{eq:dcvfbfs}
  \begin{align}
    \tilde a(w,  v) & = \int_0^1 w' v' \ d x + \sum_{\ell = 1}^\alpha \eta_{a,\ell} \pi^2 h^{6\ell-3} s^{(2\ell)}(w,  v), \label{eq:dcvfbfsa} \\
    \tilde b(w,  v) & = \int_0^1 w v \ d x + \sum_{\ell = 1}^\alpha \eta_{b,\ell} h^{6\ell-1} s^{(2\ell)}(w,  v), \label{eq:dcvfbfsb} \\
    s^{(2\ell)}(w,  v) & = w^{(2\ell)}(0) v^{(2\ell)}(0) + w^{(2\ell)}(1) v^{(2\ell)}(1), \label{eq:dcvfbfsc}
  \end{align}
\end{subequations}
and $\eta_{a,\ell}, \eta_{b,\ell}$ are parameters that penalize the (jumps of) product of higher-order derivatives at the domain boundaries. As default we set $\eta_{a,\ell} = \eta_{b,\ell} = 1$. We refer to this penalty technique near the boundaries as a discrete correction (DC) technique while the overall method as a discretely corrected isogeometric analysis (DC-IGA).

\begin{remark} \label{rem:sc}
For the bilinear form $\tilde a(\cdot, \cdot)$, we set the scaling $h^{6\ell-3}$, where $6\ell - 3 = (4\ell - 1) + 2(\ell - 1)$. The part $(4\ell - 1)$ contributes to the scaling that is consistent the scaling of the original inner-product of the derivatives $(w', v')$. The part $2(\ell - 1)$ contributes to the consistency errors from the $(-\Delta)^\ell u = \lambda^\ell u$. As a consequence, we rewrite $\lambda^\ell = \lambda \lambda^{\ell-1}$, where $\lambda^{\ell-1}$ scales like $\mathcal{O}(h^{2(1-\ell)})$ in the high-frequency region. Thus, the bilinear form $\tilde b(\cdot, \cdot)$ has a scaling with two extra orders with respect to $h$. We validate this observation in the numerical section. 
\end{remark}

As in~\eqref{eq:mevp}, this leads to the matrix eigenvalue problem
\begin{equation} \label{eq:mevp1d}
\tilde{K}^{1D} U = \tilde \lambda^h \tilde{M}^{1D} U,
\end{equation}
where $\tilde{K}_{kl} =  \tilde a(\theta_p^k, \theta_p^l), \tilde{M}_{kl} = \tilde b(\theta_p^k, \theta_p^l),$ and $U$ is the corresponding representation of the eigenvector as the coefficients of the B-spline basis functions. Using the tensor-product structure and introducing the outer product, the corresponding 2D matrix problem is
\begin{equation} \label{eq:mevp2d}
( \tilde{K}^{1D}_x \otimes \tilde{M}^{1D}_y + \tilde{M}^{1D}_x \otimes \tilde{K}^{1D}_y )  U = \tilde \lambda^h \tilde{M}^{1D}_x \otimes \tilde{M}^{1D}_y U,
\end{equation}
where $\otimes$ is the Kronecker product, and 3D matrix problem is
\begin{equation} \label{eq:mevp3d}
( \tilde{K}^{1D}_x \otimes \tilde{M}^{1D}_y \otimes \tilde{M}^{1D}_z + \tilde{M}^{1D}_x \otimes \tilde{K}^{1D}_y \otimes \tilde{M}^{1D}_z + \tilde{M}^{1D}_x \otimes \tilde{M}^{1D}_y \otimes \tilde{K}^{1D}_z )  U = \tilde \lambda^h \tilde{M}^{1D}_x \otimes \tilde{M}^{1D}_y \otimes \tilde{M}^{1D}_z U,
\end{equation}
where $\tilde{K}^{1D}_q,  \tilde{M}^{1D}_q, q=x,y,z$ are 1D matrices generated as in~\eqref{eq:mevp1d} from the modified bilinear forms in~\eqref{eq:dcvfbfs}. We refer to~\cite{ ainsworth2010optimally, gao2013kronecker} for similar extensions to multiple dimensions. 

Alternatively, for multidimensional problem~\eqref{eq:pde}, to remove the outliers, one can penalize the boundary nodes similarly with the following bilinear forms
\begin{subequations} 
  \begin{align}
    \tilde a(w,  v) & = (\nabla w, \nabla v) + \sum_{\ell = 1}^\alpha \eta_{a,\ell} \pi^2 h^{6\ell-3} s^{(2\ell)}(w,  v), \\
    \tilde b(w,  v) & = ( w,  v ) + \sum_{\ell = 1}^\alpha \eta_{b,\ell} h^{6\ell-1} s^{(2\ell)}(w,  v), \\
    s^{(2\ell)}(w,  v) & = (\Delta^\ell w, \Delta^\ell v)_{\partial \Omega},
  \end{align}
\end{subequations}
where $(\cdot, \cdot)_{\partial \Omega}$ denotes the $L^2$ inner product on the boundary ${\partial \Omega}$. These bilinear forms lead to matrix eigenvalue problems which are slightly different (i.e., modified entries near the boundaries) from the problems~\eqref{eq:mevp2d} in 2D and~\eqref{eq:mevp3d} in 3D. The advantage is that on the penalty terms, one does not require a tensor-product mesh (but may require on the original isogeometric basis functions). In the numerical simulation, we observe that these bilinear forms remove the outliers. However, the spectral errors at high-frequency region are larger. Thus, for multidimensional problem~\eqref{eq:pde}, we focus on the~\eqref{eq:mevp2d} in 2D and~\eqref{eq:mevp3d} in 3D.

\begin{remark}
  The new scheme corresponds to a consistent boundary penalization. The correction is consistent in the sense that the extra terms vanish for smooth solutions. For $p=1,2$, $\alpha=0$, thus, the extra terms become zeros. There are no outliers and hence no modification to the bilinear forms. The additional terms penalize the jumps of the spatial operators at the boundary nodes.  The jump terms consider the differences of the $2\ell$-th derivatives of the basis functions and the Dirichlet boundary values (zeros, in this case, hence omitted in the modified bilinear forms).
\end{remark}

Our numerical results show that for $\ell \ge 1$, we can set $\eta_{a,\ell}=0$ or $\eta_{b,\ell}=0$ and assign the other parameter appropriately to remove the outliers from the spectrum. However, the eigenfunction errors remain large. When $\eta_{a,\ell}>0$ or $\eta_{b,\ell}>0$, the eigenfunction errors decrease in the high-frequency region.  By default, we set $\eta_{a,\ell}= \eta_{b,\ell} =1$ for all $\ell$. The modifications to the variational forms are local to the basis functions with support on the elements contiguous to the boundary. With these modifications, we observe optimally convergent errors in both eigenvalues and eigenfunctions.

\section{Neumann eigenvalue problem} \label{sec:nep}

Now, we consider the Neumann eigenvalue problem
\begin{equation} \label{eq:nep}
\begin{aligned}
- \Delta u & =  \lambda u \quad  \text{in} \quad \Omega, \\
\nabla u \cdot \boldsymbol{n} & = 0 \quad \text{on} \quad \partial \Omega.
\end{aligned}
\end{equation}
For this problem, the outliers appear in the spectrum of isogeometric elements when using $C^1$ quadratic elements. We describe the bilinear forms for the 1D case to obtain the matrix problem~\eqref{eq:mevp1d}  that remove the outliers from the discrete solution. 

We denote 
\begin{equation}
\beta = \lfloor \frac{p}{2} \rfloor =
\begin{cases}
\frac{p}{2}, & p \quad \text{is even}, \\
\frac{p-1}{2}, & p \quad \text{is odd}. \\
\end{cases}
\end{equation} 
Extending the arguments we propose for the Dirichlet eigenvalue problem~\eqref{eq:pde}, we introduce a boundary penalization to the isogeometric discretization of~\eqref{eq:nep} in 1D as: Find $\hat \lambda^h \in \mathbb{R}$ and $\hat u^h \in V^h_p$ such that
\begin{equation} \label{eq:dcvfhnep}
\hat a(w^h, \hat u^h) =  \hat\lambda^h b(w^h, \hat u^h), \quad \forall \ w^h \in V^h_p,
\end{equation} 
where for $w, v \in V^h_p$
\begin{subequations} \label{eq:dcvfbfsnep}
  \begin{align}
    \hat a(w,  v) & = \int_0^1 w' v' \ d x + \sum_{\ell = 1}^\beta \hat\eta_{a,\ell} \pi^2 h^{6\ell-5} s^{(2\ell-1)}(w,  v), \label{eq:dcvfbfsnepa} \\
    \hat b(w,  v) & = \int_0^1 w v \ d x + \sum_{\ell = 1}^\beta \hat \eta_{b,\ell} h^{6\ell-3} s^{(2\ell-1)}(w,  v), \label{eq:dcvfbfsnepb} \\
    s^{(2\ell-1)}(w,  v) & = w^{(2\ell-1)}(0) v^{(2\ell-1)}(0) + w^{(2\ell-1)}(1) v^{(2\ell-1)}(1), \label{eq:dcvfbfsnepc}
  \end{align}
\end{subequations}
and $\hat\eta_{a,\ell}, \hat\eta_{b,\ell}$ penalize the jumps. By default, we set $\hat\eta_{a,\ell} = \hat\eta_{b,\ell} = 1$. We use the scaling orders by following the arguments in Remark~\ref{rem:sc} for the Dirichlet eigenvalue problem. Similarly, for multidimensional problem~\eqref{eq:nep}, we use~\eqref{eq:mevp2d} and~\eqref{eq:mevp3d} for 2D and 3D matrix problems, respectively.

\section{Infinite penalization and boundary dispersion analysis} \label{sec:ana}

In this section, we consider the limiting case when the penalty goes to $\infty$. In such a case, the penalized terms on $(-\Delta)^\alpha u = \lambda^\alpha u$ at the boundaries become strong boundary conditions for the resulting matrix problems. In particular, for the Dirichlet eigenvalue problem~\eqref{eq:pde}, we present the dispersion analysis near the boundaries for $C^2$ cubic isogeometric elements. For the Neumann eigenvalue problem~\eqref{eq:nep}, we present the dispersion analysis near the boundaries for $C^1$ quadratic isogeometric elements. In these two cases on uniform meshes, we obtain the exact eigenvalues and eigenvectors from the resulting matrix eigenvalue problems.  To give the exact eigenpairs, we first present the following result.
\begin{theorem}[Analytical eigenvalues and eigenvectors, case 1] \label{thm:evab1}
Let $G^{(\xi)} = (G^{(\xi)}_{j,k})$ be a square matrix with dimension $n$ such that 
\begin{equation} \label{eq:G}
G^{(\xi)}_{j,j+k} = \xi_{|k|}, \quad  |k| \le m, \quad  j = 1,\cdots, n.
\end{equation}
Let $H^{(\xi)} = (H^{(\xi)}_{j,k})$ be a zero square matrix with dimension $n$. For $m\ge 2$, we modify $H^{(\xi)}$ with
\begin{equation} \label{eq:H1}
H^{(\xi)}_{j,k} = H^{(\xi)}_{n-j+1,n-k+1}=  \xi_{j+k}, \quad  k = 1, \cdots, m-j, \quad j =1, \cdots, m-1.
\end{equation}
Let 
$
A = G^{(\mu)} - H^{(\mu)}, B = G^{(\nu)} - H^{(\nu)}.
$
Assume that $B$ is invertible. Then, the generalized matrix eigenvalue problem $A X = \lambda BX$ has eigenpairs $(\lambda_j, X_j)$ with $X_j = (X_{j,1}, \cdots, X_{j, n})^T$ where
\begin{equation} \label{eq:set1}
  \lambda_j = \frac{ \mu_0 + 2 \sum_{l=1}^{m} \mu_l \cos(l j\pi h) }{ \nu_0 + 2 \sum_{l=1}^{m} \nu_l \cos(l j\pi h) }, \quad X_{j,k} = c \sin( j \pi k h), \quad h = \frac{1}{n+1}, \  j, k =1,2,\cdots, n,
\end{equation}
where $0 \ne c \in \mathbb{R}.$
\end{theorem}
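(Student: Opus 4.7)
The plan is to verify directly that $X_j = (c\sin(j\pi k h))_{k=1}^n$ is a simultaneous eigenvector of $A$ and $B$ with eigenvalues $\Lambda^{\mu}_j := \mu_0 + 2\sum_{l=1}^m \mu_l \cos(lj\pi h)$ and $\Lambda^{\nu}_j := \nu_0 + 2\sum_{l=1}^m \nu_l \cos(lj\pi h)$ respectively. Once this is established, the invertibility of $B$ gives $A X_j = (\Lambda^\mu_j / \Lambda^\nu_j)\, B X_j$, which matches the stated formula for $\lambda_j$. Since the $n$ discrete sine vectors $X_1,\dots, X_n$ are linearly independent, this accounts for the full spectrum. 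The computation for $B$ is identical to the one for $A$ after replacing $\mu$ by $\nu$, so it suffices to treat $A$.

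I will split the rows of $A$ into three groups: the interior rows $m \le k \le n-m+1$ where $H^{(\mu)}$ vanishes, the left-boundary rows $1 \le k \le m-1$, and the right-boundary rows $n-m+2 \le k \le n$. On an interior row,
\[
(A X_j)_k = (G^{(\mu)} X_j)_k = \sum_{|l|\le m} \mu_{|l|}\, c\sin(j\pi(k+l)h);
\]
pairing the $\pm l$ terms and invoking the product-to-sum identity $\sin(j\pi(k+l)h) + \sin(j\pi(k-l)h) = 2\sin(j\pi k h)\cos(j\pi l h)$ produces $\Lambda^\mu_j X_{j,k}$ immediately.

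For a left-boundary row $1 \le k \le m-1$, I will add and subtract the ``phantom'' terms $\mu_{|l|}\, c\sin(j\pi(k+l)h)$ for $-m \le l \le -k$ to extend the Toeplitz sum, so that
\[
(G^{(\mu)} X_j)_k = \Lambda^\mu_j X_{j,k} \;-\; c\sum_{l=-m}^{-k-1} \mu_{-l}\sin(j\pi(k+l)h),
\]
the $l=-k$ contribution vanishing because $\sin 0 = 0$. Applying the odd symmetry $\sin(-\theta) = -\sin(\theta)$ and reindexing with $q = -l-k$ converts the residual sum into $c\sum_{q=1}^{m-k} \mu_{k+q}\sin(j\pi q h)$, which is precisely $(H^{(\mu)} X_j)_k$ by the definition~\eqref{eq:H1}. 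Hence $(A X_j)_k = \Lambda^\mu_j X_{j,k}$ on these rows as well.

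The right-boundary rows are handled by the mirror argument, using $\sin(j\pi(n+1)h) = 0$ together with the reflection $\sin(j\pi(n+1+q)h) = (-1)^{j}\sin(j\pi q h)$ to identify the phantom contributions with the bottom-right block of $H^{(\mu)}$ defined by $H^{(\mu)}_{n-j+1,n-k+1}=\mu_{j+k}$; a parity factor $(-1)^{j+1}$ from $\sin(j\pi(n-k'+1)h) = (-1)^{j+1}\sin(j\pi k' h)$ appears on both sides of the identity and cancels. The main obstacle is purely combinatorial: keeping the index translations aligned so that the range of $q$ in the residual sum matches the range in which $H^{(\mu)}_{k,q}$ was defined. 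A conceptually cleaner route, which I will use as a sanity check, is to introduce the odd $2(n+1)$-antiperiodic extension $\tilde X_{j,i} = c\sin(j\pi i h)$ for all $i \in \mathbb{Z}$; then the translation-invariant Toeplitz action gives $(G^{(\mu)}\tilde X_j)_k = \Lambda^\mu_j \tilde X_{j,k}$ at every $k$, and $H^{(\mu)}$ is exactly the boundary correction needed so that the restriction to $k \in \{1,\dots,n\}$ reproduces $A X_j$.
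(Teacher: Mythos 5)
Your proposal is correct and follows essentially the same route as the paper's own (much terser) proof: plug the discrete sine ansatz into each row, use the product-to-sum identity for the interior rows, check that the boundary modification $H^{(\mu)}$ exactly cancels the phantom terms created by the odd reflection of $\sin(j\pi k h)$ about $k=0$ and $k=n+1$, and conclude via linear independence of the $n$ sine vectors (with invertibility of $B$ guaranteeing $\Lambda^{\nu}_j \neq 0$). Your write-up simply makes explicit the boundary-row bookkeeping that the paper leaves to the reader, and the antiperiodic-extension remark is a clean restatement of the same verification.
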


\begin{proof}
  The matrices defined here are Toeplitz matrices with modified entries near the boundaries.  From Bloch wave assumption and with the modifications near the boundaries in mind, we seeks eigenvectors of the form $\sin( j \pi k h)$. Using the trigonometric identity $\sin(\alpha \pm \beta) = \sin(\alpha) \cos(\beta) \pm \cos(\alpha) \sin(\beta)$, one can verify that each row of the problem $A X = \lambda BX$, i.e., $\sum_{k=1}^n A_{ik} X_{j,k} = \lambda \sum_{k=1}^n B_{ik} X_{j,k}, i=1,\cdots, n$, reduces to $\alpha_0 + 2 \sum_{l=1}^{m} \alpha_l \cos(l j\pi h) = \lambda \big( \beta_0 + 2 \sum_{l=1}^{m} \beta_l \cos(l j\pi h) \big)$. This is independent of the row number $i$, that is, all the boundary and internal rows lead to the same expression. The problem has at most $n$ eigenpairs and the $n$ eigenvectors are linearly independent.  Thus, the $n$ eigenpairs $(\lambda_j, X_j)$ given in~\eqref{eq:set1} are the analytical solutions to the problem $A X = \lambda BX$.  This completes the proof.
\end{proof}

The eigenpairs given in this theorem correspond to the Dirichlet eigenvalue problem.  Similarly, we have the following result for the Neumann eigenvalue problem.  We omit the proof for brevity.
\begin{theorem}[Analytical eigenvalues and eigenvectors, case 2] \label{thm:evab2}
Let $G^{(\xi)}$ be defined in~\eqref{eq:G}. 
Let $H^{(\xi)} = (H^{(\xi)}_{j,k})$ be a zero square matrix with dimension $n$. We modify $H^{(\xi)}$ with
\begin{equation} \label{eq:H2}
  H^{(\xi)}_{j,k} = H^{(\xi)}_{n-j+1,n-k+1}=  \xi_{j+k-1}, \quad  k = 1, \cdots, m-j+1, \quad j =1, \cdots, m.
\end{equation}
Let
$
A = G^{(\mu)} + H^{(\mu)}, B = G^{(\nu)} + H^{(\nu)}.
$
Assume that $B$ is invertible.  Let $h = \frac{1}{n}$. Then, the generalized matrix eigenvalue problem $A X = \lambda BX$ has eigenpairs $(\lambda_j, X_j)$ with $X_j = (X_{j,1}, \cdots, X_{j, n})^T$ where
\begin{equation}  \label{eq:set3}
  \lambda_{j+1} = \frac{ \mu_0 + 2 \sum_{l=1}^{m} \mu_l \cos(l j\pi h) }{ \nu_0 + 2 \sum_{l=1}^{m} \nu_l \cos(l j\pi h) }, \qquad X_{j+1,k} = c \cos\big( j \pi (k-\frac12) h \big)
\end{equation}
with $k =1,2,\cdots, n, \ j=0,1,\cdots,n-1, 0 \ne c \in \mathbb{R}.$
\end{theorem}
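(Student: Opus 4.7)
The plan is to mimic the argument of Theorem~\ref{thm:evab1}, but with a cosine ansatz shifted by half a step that is appropriate for the Neumann setting. Specifically, I would posit the Bloch-wave trial vector $X_{j+1,k} = c\cos\!\big(j\pi(k-\tfrac12)h\big)$ with $h=1/n$, plug it into $AX = \lambda BX$, and verify that each row reduces to the single scalar identity
\begin{equation*}
\mu_0 + 2\sum_{l=1}^{m}\mu_l\cos(lj\pi h) \;=\; \lambda\Bigl(\nu_0 + 2\sum_{l=1}^{m}\nu_l\cos(lj\pi h)\Bigr),
\end{equation*}
independently of the row index. Since this is linear in the matrix entries, it suffices to check this separately for $G^{(\xi)}$ and $H^{(\xi)}$ with coefficients $\xi=\mu$, and then combine.

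For an interior row $i$ (with $m<i\le n-m$) only $G^{(\xi)}$ contributes, and the verification is the standard Bloch-wave computation: apply the product-to-sum identity $\cos(\alpha+\beta)+\cos(\alpha-\beta)=2\cos\alpha\cos\beta$ with $\alpha=j\pi(i-\tfrac12)h$ and $\beta=lj\pi h$ to collect $\sum_{l=-m}^{m}\xi_{|l|}\cos(j\pi(i+l-\tfrac12)h)$ into $\cos(j\pi(i-\tfrac12)h)\bigl(\xi_0+2\sum_{l=1}^{m}\xi_l\cos(lj\pi h)\bigr)$. This factors out $X_{j+1,i}$, as desired.

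The main work, and the step I expect to be the main obstacle, is the boundary rows $i=1,\dots,m$ (and symmetrically $i=n-m+1,\dots,n$). There the row of $G^{(\xi)}$ is truncated: the terms $\xi_l\,X_{j+1,i-l}$ for $l\ge i$ are missing. The correction matrix $H^{(\xi)}$ is designed exactly so that $H^{(\xi)}_{i,k}=\xi_{i+k-1}$ restores these missing terms via the even reflection of the cosine about the half-integer point $k=\tfrac12$, namely $\cos\!\big(j\pi((1-k')-\tfrac12)h\big)=\cos\!\big(j\pi(k'-\tfrac12)h\big)$. Concretely, I would rewrite the missing contribution $\sum_{l=i}^{m}\xi_l\cos(j\pi(i-l-\tfrac12)h)$ using this reflection and match it term-by-term with $\sum_{k=1}^{m-i+1}\xi_{i+k-1}\cos(j\pi(k-\tfrac12)h)$ provided by $H^{(\xi)}$; the mirror-image computation handles the rows near $i=n$. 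After this bookkeeping, the boundary rows yield the same scalar identity as the interior rows.

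Finally, I would note that the $n$ vectors $X_{j+1}$ for $j=0,1,\ldots,n-1$ are linearly independent (they are the columns of a discrete cosine transform of type II, which is orthogonal up to scaling), so these $n$ pairs exhaust the spectrum of the $n\times n$ pencil $(A,B)$ under the invertibility hypothesis on $B$. This proves that~\eqref{eq:set3} lists all eigenpairs, completing the argument.
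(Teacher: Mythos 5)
Your proposal is correct and follows essentially the same route the paper intends: the paper omits this proof as "similar" to Theorem~\ref{thm:evab1}, whose argument is exactly your Bloch-wave verification that every row reduces to the same scalar dispersion relation, plus linear independence of the $n$ trial vectors. In fact you supply more detail than the paper does, by explicitly checking that the entries of $H^{(\xi)}$ restore the truncated Toeplitz terms through the evenness of the cosine about $k=\tfrac12$ (and its mirror image at the right boundary), which is the key bookkeeping step.
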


The matrix $G$ in~\eqref{eq:G} is a Toeplitz matrix while the matrices $H$ defined in~\eqref{eq:H1} and~\eqref{eq:H2} are zero matrices with modified entries at the first and last few rows (near boundaries). All these matrices are symmetric and persymmetric. We refer to~\cite[Section 2]{ deng2021} for more details.

\subsection{$C^2$ cubic elements for the Dirichlet eigenvalue problem}

First, we solve the problem~\eqref{eq:pde} in 1D by the isogeometric elements~\eqref{eq:vfh} with $C^2$ cubic B-splines. We discretize the unit interval $\Omega = [0,1]$ with a uniform grid $\mathcal{T}_h = \{ x_j = jh \}_{j=0}^N$, where $h = 1/N$ is the grid size. The $C^2$ cubic isogeometric elements lead to the stiffness and mass matrices (after applying the homogeneous Dirichlet boundary conditions)
\begin{equation} \label{eq:km3}
  \begin{aligned}
    K & = \frac{1}{h}
    \begin{bmatrix}
      \frac{3}{2} & \frac{3}{80} & -\frac{1}{4} & -\frac{1}{80} \\[0.2cm]
      \frac{3}{80} & \frac{27}{40} & -\frac{1}{30} & -\frac{47}{240} & -\frac{1}{120} \\[0.2cm]
      -\frac{1}{4} & -\frac{1}{30} & \frac{2}{3} & -\frac{1}{8} & -\frac{1}{5} & -\frac{1}{120}  \\[0.2cm]
      -\frac{1}{80} & -\frac{47}{240} & -\frac{1}{8} & \frac{2}{3} & -\frac{1}{8} & -\frac{1}{5} & -\frac{1}{120}  \\[0.2cm]
      & -\frac{1}{120} & -\frac{1}{5} & -\frac{1}{8} & \frac{2}{3} & -\frac{1}{8} & -\frac{1}{5} & -\frac{1}{120}  \\[0.2cm]
      &  & \ddots & \ddots & \ddots & \ddots & \ddots & \ddots  & \ddots  \\[0.2cm]
      & & & -\frac{1}{120}  & -\frac{1}{5} & -\frac{1}{8} & \frac{2}{3} & -\frac{1}{8} & -\frac{47}{240}  & -\frac{1}{80}  \\[0.2cm]
      & & & & -\frac{1}{120}  & -\frac{1}{5} & -\frac{1}{8} & \frac{2}{3} & -\frac{1}{30} &  -\frac{1}{4}  \\[0.2cm]
      & && & & -\frac{1}{120}  & -\frac{47}{240} & -\frac{1}{30} & \frac{27}{40} & \frac{3}{80}   \\[0.2cm]
      & & && & & -\frac{1}{80}  & -\frac{1}{4} & \frac{3}{80} & \frac{3}{2}   \\[0.2cm]
    \end{bmatrix}, \\
    M &= h
    \begin{bmatrix}
      \frac{31}{140} & \frac{5}{32} & \frac{29}{840} & \frac{1}{3360} \\[0.2cm]
      \frac{5}{32} & \frac{183}{560} & \frac{283}{1260} & \frac{239}{10080} & \frac{1}{5040} \\[0.2cm]
      \frac{29}{840} &\frac{283}{1260} & \frac{151}{315} & \frac{397}{1680} & \frac{1}{42} & \frac{1}{5040}  \\[0.2cm]
      \frac{1}{3360} & \frac{239}{10080}  & \frac{397}{1680} & \frac{151}{315} &\frac{397}{1680} & \frac{1}{42} & \frac{1}{5040}  \\[0.2cm]
      & \frac{1}{5040}  & \frac{1}{42} & \frac{397}{1680} & \frac{151}{315} &\frac{397}{1680} & \frac{1}{42} & \frac{1}{5040}  \\[0.2cm]
      &  & \ddots & \ddots & \ddots & \ddots & \ddots & \ddots  & \ddots  \\[0.2cm]
      & & & \frac{1}{5040}  & \frac{1}{42} & \frac{397}{1680} & \frac{151}{315} & \frac{397}{1680} & \frac{239}{10080}  & \frac{1}{3360} \\[0.2cm]
      & & & & \frac{1}{5040}  & \frac{1}{42} & \frac{397}{1680} & \frac{151}{315} & \frac{283}{1260} & \frac{29}{840} \\[0.2cm]
      & &&& & \frac{1}{5040}  & \frac{239}{10080}  & \frac{283}{1260}  & \frac{183}{560}  & \frac{5}{32} \\[0.2cm]
      & & &&& & \frac{1}{3360}  & \frac{29}{840}   & \frac{5}{32} & \frac{31}{140}  \\[0.2cm]
    \end{bmatrix}, \\
  \end{aligned}
\end{equation}
which are of dimension $(N+1) \times (N+1)$.

For the purpose of dispersion analysis, one assumes that the component of the eigenvector $U_j$ of~\eqref{eq:mevp} takes the form (see, e.g.,~\cite[Sec. 5]{ cottrell2006isogeometric} and~\cite[Sec. 4]{ hughes2008duality})
\begin{equation} \label{eq:ujk}
  U_{j,k} = \sin(\omega_j k h), \quad k = 1, 2, \cdots, N+1,
\end{equation}
where $\omega_j = j\pi$ is the eigenfrequency associated with the $j$-th mode. For an internal node with $5 \le k \le N-3$, the $k$-th row of the matrix problem~\eqref{eq:mevp} with $K$ and $M$ defined in~\eqref{eq:km3} leads to the dispersion relation
\begin{equation} \label{eq:drep3}
  \lambda^h_j h^2 = - 42  + \frac{1008 \big(52 + 49 \cos(\omega_j h) + 4 \cos(2 \omega_j h) \big)}{1208 + 1191 \cos(\omega_j h) + 120 \cos(2 \omega_j h) +  \cos(3 \omega_j h)}.
\end{equation}
With $\omega_j h < 1$, a Taylor expansion of~\eqref{eq:drep3} leads to the dispersion error representation
\begin{equation} \label{eq:tep3}
\Lambda^h = \Lambda + \frac{1}{30240} \Lambda^4 + \mathcal{O}(\Lambda^5),
\end{equation}
where $\Lambda = (\omega_j h)^2$ and $\Lambda^h = h^2 \lambda^h_j$. The dispersion error is given by subtracting both sides $\Lambda$ and dividing both sides by $h^2$. Similarly, the first four (similarly, for the last four) rows of matrix problem~\eqref{eq:mevp} with $K$ and $M$ defined in~\eqref{eq:km3} lead to the dispersion relations
\begin{equation} \label{eq:drep4}
  \begin{aligned}
    \lambda^h_j h^2 & = - 42  + \frac{2016 \big(10 + 11 \cos(\omega_j h) + 2 \cos(2 \omega_j h) \big)}{430 + 526 \cos(\omega_j h) + 116 \cos(2 \omega_j h) +  \cos(3 \omega_j h)}, \\
    \lambda^h_j h^2 & = - 42  + \frac{4032 \big(40 + 76 \cos(\omega_j h) + 47 \cos(2 \omega_j h) + 4\cos(3 \omega_j h) \big)}{3841 + 7066 \cos(\omega_j h) + 4532 \cos(2 \omega_j h) +  478\cos(3 \omega_j h) + 4\cos(4 \omega_j h)}, \\
    \lambda^h_j h^2 & = - 42  + \frac{1008 \big(57 + 96 \cos(\omega_j h) + 108 \cos(2 \omega_j h) + \cdots \big)}{1355 + 2324 \cos(\omega_j h) + 2536 \cos(2 \omega_j h) + \cdots}, \\
    \lambda^h_j h^2 & = - 42  + \frac{84 \big(96 \sin(2 \omega_j h) + 1176 \sin(3 \omega_j h) + \cdots \big)}{3 \sin(\omega_j h) + 239 \sin(2 \omega_j h) +\cdots},
  \end{aligned}
\end{equation}
which have the following error representations  
\begin{equation} \label{eq:dreep4}
  \begin{aligned}
    \Lambda^h &=  \frac{1302}{1073} + \mathcal{O}(\Lambda),\\
    \Lambda^h &=  \frac{518}{1769} + \mathcal{O}(\Lambda), \\
    \Lambda^h &=  \frac{42}{941} + \mathcal{O}(\Lambda), \\
    \Lambda^h &=  \frac{14}{13439} + \mathcal{O}(\Lambda),
  \end{aligned}
\end{equation}
respectively. These dispersion relations and errors near the boundary are inconsistent with~\eqref{eq:drep3} and~\eqref{eq:tep3} for the internal nodes. This inconsistency leads to the outliers in the approximate spectrum. The boundary penalization technique proposed in Section~\ref{sec:miIgA} recovers the consistency weakly. In the limiting case, when the penalty goes to infinity for $C^2$ cubic elements, we recover the consistency strongly. We perform the analysis as follows.

In the limiting case when the penalty goes to $\infty$, the penalized terms in~\eqref{eq:dcvfbfs} at the boundaries $x=0$ and $x=1$ become the conditions on the unknown vector components of the generalized matrix eigenvalue problem~\eqref{eq:mevp}
\begin{equation} \label{eq:3}
3 U_1 - U_2 = 0, \qquad 3 U_{N+1} - U_N = 0.
\end{equation}
We apply these conditions to reduce the matrix eigenvalue problem~\eqref{eq:mevp} from dimension $(N+1) \times (N+1)$ to dimension $(N-1) \times (N-1)$. In particular, we multiply the first column by $\frac13$ and add the result to the second column, then multiply the first row by $\frac13$ and add the result to the second row. Similarly, we multiply the $(N+1)$-th column by $\frac13$ and add the result to the $N$-th column, then multiply the $(N+1)$-th row by $\frac13$ and add the result to the $N$-th row. Removing the first and last rows and columns, we obtain
\begin{equation} \label{eq:km3new}
  \begin{aligned}
    \tilde{K} & = \frac{1}{h}
    \begin{bmatrix}
      \frac{13}{15} & -\frac{7}{60} & -\frac{1}{5} & -\frac{1}{120} \\[0.2cm]
      -\frac{7}{60} & \frac{2}{3} & -\frac{1}{8} & -\frac{1}{5} & -\frac{1}{120} \\[0.2cm]
      -\frac{1}{5} & -\frac{1}{8} & \frac{2}{3} & -\frac{1}{8} & -\frac{1}{5} & -\frac{1}{120}  \\[0.2cm]
      -\frac{1}{120} & -\frac{1}{5} & -\frac{1}{8} & \frac{2}{3} & -\frac{1}{8} & -\frac{1}{5} & -\frac{1}{120}  \\[0.2cm]
      & \ddots & \ddots & \ddots & \ddots & \ddots & \ddots  & \ddots  \\[0.2cm]
      & & -\frac{1}{120}  & -\frac{1}{5} & -\frac{1}{8} & \frac{2}{3} & -\frac{1}{8} & -\frac{1}{5}   \\[0.2cm]
      & & & -\frac{1}{120}  & -\frac{1}{5} & -\frac{1}{8} & \frac{2}{3} & -\frac{7}{60}   \\[0.2cm]
      && & & -\frac{1}{120}  & -\frac{1}{5} & -\frac{7}{60} & \frac{13}{15}   \\[0.2cm]
    \end{bmatrix}, \\
    \tilde{M} &= h
    \begin{bmatrix}
      \frac{41}{90} & \frac{17}{72} & \frac{1}{42} & \frac{1}{5040} \\[0.2cm]
      \frac{17}{72} & \frac{151}{315} & \frac{397}{1680} & \frac{1}{42} & \frac{1}{5040} \\[0.2cm]
      \frac{1}{42} &\frac{397}{1680} & \frac{151}{315} & \frac{397}{1680} & \frac{1}{42} & \frac{1}{5040}  \\[0.2cm]
      \frac{1}{5040} & \frac{1}{42} & \frac{397}{1680} & \frac{151}{315} &\frac{397}{1680} & \frac{1}{42} & \frac{1}{5040}  \\[0.2cm]
      & \ddots & \ddots & \ddots & \ddots & \ddots & \ddots  & \ddots  \\[0.2cm]
      & & \frac{1}{5040}  & \frac{1}{42} & \frac{397}{1680} & \frac{151}{315} & \frac{397}{1680} & \frac{1}{42} \\[0.2cm]
      & & & \frac{1}{5040}  & \frac{1}{42} & \frac{397}{1680} & \frac{151}{315} & \frac{17}{72} \\[0.2cm]
      &&& & \frac{1}{5040}  & \frac{1}{42} & \frac{17}{72} & \frac{41}{90}  \\[0.2cm]
    \end{bmatrix}, \\
  \end{aligned}
\end{equation}
which are of dimensions $(N-1) \times (N-1)$. Applying Theorem~\ref{thm:evab1}, we have the following result.

\begin{theorem}[Analytical solutions] \label{lem:p3}
  Let $\tilde{K}$ and $\tilde{M}$ be defined in~\eqref{eq:km3new}.  The problem $\tilde K U = \lambda^h \tilde M U$ has analytical eigenpairs $(\lambda_j^h, U_j )$ where
\begin{equation} \label{eq:evefp3}
  \lambda^h_j = - 42N^2  + \frac{1008N^2 \big(52 + 49 \cos(j \pi h) + 4 \cos(2 j \pi h) \big)}{1208 + 1191 \cos(j \pi h) + 120 \cos(2 j \pi h) +  \cos(3 j \pi h)}, \quad U_{j,k} = c \sin(j\pi k h)
\end{equation}
with $h = \frac{1}{N}, j, k =1,2,\cdots, N-1, 0 \ne c \in \mathbb{R}$. 
\end{theorem}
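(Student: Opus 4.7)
The plan is to apply Theorem~\ref{thm:evab1} directly, after identifying $\tilde{K}$ and $\tilde{M}$ with the structure $A = G^{(\mu)} - H^{(\mu)}$, $B = G^{(\nu)} - H^{(\nu)}$ for suitable coefficients with bandwidth $m = 3$. The first step is to factor out the $h$-prefactors: write $\tilde{K} = h^{-1} A$ and $\tilde{M} = h B$ where $A$ and $B$ have the same numerical entries as in~\eqref{eq:km3new} but are dimensionless. The generalized problem then becomes $A U = (\lambda^h h^{2}) B U$, so it suffices to identify $(\lambda^h h^2, U)$ as eigenpairs of $A U = \tilde\lambda B U$ via the theorem, with $n = N-1$ so that the theorem's spacing $1/(n+1)$ coincides with our grid size $h = 1/N$.

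Next, I would extract the Toeplitz coefficients from the interior rows (rows $5$ through $N-3$): this immediately gives $\mu_0 = \tfrac{2}{3},\ \mu_1 = -\tfrac{1}{8},\ \mu_2 = -\tfrac{1}{5},\ \mu_3 = -\tfrac{1}{120}$ and $\nu_0 = \tfrac{151}{315},\ \nu_1 = \tfrac{397}{1680},\ \nu_2 = \tfrac{1}{42},\ \nu_3 = \tfrac{1}{5040}$. Then I would check the boundary modifications against~\eqref{eq:H1}, which for $m=3$ prescribes $H_{1,1} = \xi_2$, $H_{1,2} = H_{2,1} = \xi_3$, and zero elsewhere in the modified block. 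For $A$: the entry $A_{1,1} = \mu_0 - \mu_2 = \tfrac{2}{3} + \tfrac{1}{5} = \tfrac{13}{15}$ matches; $A_{1,2} = \mu_1 - \mu_3 = -\tfrac{1}{8} + \tfrac{1}{120} = -\tfrac{7}{60}$ matches; and entries $A_{1,3}, A_{1,4}, A_{2,2}, \ldots$ coincide with $\mu_{|k-j|}$ because the corresponding $H$-entries are zero. The symmetric (persymmetric) modifications at the bottom-right corner follow by the $H_{j,k} = H_{n-j+1,n-k+1}$ clause. The same pattern-match verification for $B$ with $\nu_\ell$ proceeds identically (e.g., $\nu_0 - \nu_2 = \tfrac{151}{315} - \tfrac{1}{42} = \tfrac{41}{90}$, etc.). Invertibility of $B$ is immediate since $\tilde{M}$ is a Gram matrix for linearly independent B-splines, hence positive definite.

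Applying Theorem~\ref{thm:evab1} then gives eigenvectors $U_{j,k} = c \sin(j\pi k h)$ and dimensionless eigenvalues $\tilde\lambda_j = (\mu_0 + 2\sum_{l=1}^{3}\mu_l\cos(l j\pi h)) / (\nu_0 + 2\sum_{l=1}^{3}\nu_l\cos(l j\pi h))$. The physical eigenvalue is $\lambda_j^h = \tilde\lambda_j / h^2 = N^2 \tilde\lambda_j$, which produces the $N^2$ prefactor in~\eqref{eq:evefp3}. The last step is an algebraic reduction: multiply numerator and denominator by $2520$ to clear fractions, giving $\tilde\lambda_j = (1680 - 630\cos\theta - 1008\cos 2\theta - 42\cos 3\theta)/(1208 + 1191\cos\theta + 120\cos 2\theta + \cos 3\theta)$ with $\theta = j\pi h$, and then rewrite in the affine form $-42 + (\text{remainder})/D$ by polynomial division against $D = 1208 + 1191\cos\theta + 120\cos 2\theta + \cos 3\theta$. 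This step is the only place where genuine computation is required, but it reproduces the interior dispersion relation~\eqref{eq:drep3} exactly, confirming the stated formula for $\lambda_j^h$ in~\eqref{eq:evefp3}. The main obstacle is purely bookkeeping: being careful with the factored $h$ scalings and with the sign convention $A = G - H$ (as opposed to the $+H$ convention of case~2), so that the boundary entries $\tfrac{13}{15}, -\tfrac{7}{60}, \tfrac{41}{90}, \tfrac{17}{72}$ reconcile with $\mu_0 - \mu_2$, $\mu_1 - \mu_3$, $\nu_0 - \nu_2$, $\nu_1 - \nu_3$, respectively.
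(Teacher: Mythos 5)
Your proposal is correct and follows the paper's own route: the paper proves this result simply by invoking Theorem~\ref{thm:evab1} after the reduction to~\eqref{eq:km3new}, which is exactly what you do, and your identification of the coefficients ($\mu_0=\tfrac23,\ \mu_1=-\tfrac18,\ \mu_2=-\tfrac15,\ \mu_3=-\tfrac1{120}$; $\nu_0=\tfrac{151}{315},\ \nu_1=\tfrac{397}{1680},\ \nu_2=\tfrac1{42},\ \nu_3=\tfrac1{5040}$), the check of the $G^{(\xi)}-H^{(\xi)}$ boundary pattern with $m=3$ and $n=N-1$, the $h$-scaling giving the $N^2$ factor, and the reduction to the affine form $-42+1008(\cdots)/(\cdots)$ all verify. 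The only cosmetic caveat is that $\tilde M$ is not literally the B-spline Gram matrix but a congruence transform of it (equivalently, the Gram matrix of the constrained basis), which still gives positive definiteness and hence invertibility of $B$.
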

 
\begin{remark}
For the matrix eigenvalue problem with matrices $\tilde{K}$ and $\tilde{M}$ as defined in~\eqref{eq:km3new}, the corresponding dispersion relations for both the internal and boundary nodes are the same as in~\eqref{eq:drep3}. That is, the dispersion relations and errors are consistent for all the eigenvector components. In such a case, the analytical approximate eigenvalues are given in~\eqref{eq:evefp3}, and it is a smooth function in terms of the index $j$; therefore, no outliers. For multidimensional problems, the analytical eigenpairs can be obtained by applying Theorem 2.8 in~\cite{deng2021}. For higher-order elements, one requires a reconstruction of the basis functions near the boundaries to recover this consistency. We will report these results shortly.
\end{remark}

\subsection{$C^1$ quadratic elements for the Neumann eigenvalue problem}

We now present the boundary dispersion analysis for the Neumann eigenvalue problem~\eqref{eq:nep} when using $C^1$ quadratic elements. On a uniform grid with $N$ elements in 1D, the isogeometric analysis with $C^1$ quadratic elements for~\eqref{eq:nep} leads to the following stiffness and mass matrices
\begin{equation} \label{eq:km2}
  \begin{aligned}
    K & = \frac{1}{h}
    \begin{bmatrix}
      \frac{4}{3} & -1 & -\frac{1}{3} \\[0.2cm]
      -1 & \frac{4}{3} & -\frac{1}{6} & -\frac{1}{6} \\[0.2cm]
      -\frac{1}{3} & -\frac{1}{6} & 1 & -\frac{1}{3} & -\frac{1}{6} \\[0.2cm]
      &   -\frac{1}{6} & -\frac{1}{3} & 1 & -\frac{1}{3} & -\frac{1}{6} \\[0.2cm]
      &   & \ddots & \ddots & \ddots & \ddots & \ddots &  \\[0.2cm]
      &  & & -\frac{1}{6} & -\frac{1}{3} & 1 & -\frac{1}{3} & -\frac{1}{6} \\[0.2cm]
      & & &  & -\frac{1}{6} & -\frac{1}{3} & 1 & -\frac{1}{6} & -\frac{1}{3}   \\[0.2cm]
      & &&& & -\frac{1}{6} & -\frac{1}{6} & \frac{4}{3}  & -1 \\[0.2cm]
      & & &&& & -\frac{1}{3} & -1 & \frac{4}{3}  \\
    \end{bmatrix}, \\
    M &= h
    \begin{bmatrix}
      \frac{1}{5} & \frac{7}{60} & \frac{1}{60} \\[0.2cm]
      \frac{7}{60} & \frac{1}{3} & \frac{5}{24} & \frac{1}{120} \\[0.2cm]
      \frac{1}{60} & \frac{5}{24} & \frac{11}{20} & \frac{13}{60} & \frac{1}{120} \\[0.2cm]
      & \frac{1}{120} & \frac{13}{60} & \frac{11}{20} & \frac{13}{60} & \frac{1}{120} \\[0.2cm]
      &  & \ddots & \ddots & \ddots & \ddots & \ddots &  \\[0.2cm]
      &  & & \frac{1}{120} & \frac{13}{60} & \frac{11}{20} & \frac{13}{60} & \frac{1}{120} \\[0.2cm]
      &  & &  & \frac{1}{120} & \frac{13}{60} & \frac{11}{20} & \frac{5}{24} & \frac{1}{60} \\[0.2cm]
      & &&& & \frac{1}{120} & \frac{5}{24}  & \frac{1}{3}  & \frac{7}{60} \\[0.2cm]
      & & &&& & \frac{1}{60} & \frac{7}{60}  & \frac{1}{5}  
    \end{bmatrix},
  \end{aligned}
\end{equation}
which are of dimensions $(N+2) \times (N+2)$. 

Similarly, we assume that the internal component of the eigenvector $U_j$ of~\eqref{eq:mevp} with these matrices takes the form 
\begin{equation} \label{eq:ujknep}
U_{j,k} = \cos(\omega_j \big(k-\frac32) h \big), \quad k = 1, 2, \cdots, N+2,
\end{equation}
where $\omega_j = j\pi$ is the eigenfrequency associated with the $j$-th mode. For an internal node with $4 \le k \le N-1$, the  $k$-th row of the matrix problem~\eqref{eq:mevp} with $K$ and $M$ defined in~\eqref{eq:km2} leads to the dispersion relation
\begin{equation} \label{eq:drep2nep}
  \lambda^h_j h^2 = - 20  + \frac{240 \big(3 + 2 \cos(\omega_j h) \big)}{33 + 26 \cos(\omega_j h) + \cos(2 \omega_j h) }.
\end{equation}
With $\omega_j h < 1$, a Taylor expansion of~\eqref{eq:drep2nep} leads to the dispersion error representation
\begin{equation} \label{eq:tep2nep}
\Lambda^h = \Lambda + \frac{1}{720} \Lambda^3 + \mathcal{O}(\Lambda^4).
\end{equation}
Similarly, the first two (similarly, for the last two) rows of matrix problem~\eqref{eq:mevp} with $K$ and $M$ defined in~\eqref{eq:km2} lead to the dispersion relations
\begin{equation} \label{eq:drep3nep}
  \begin{aligned}
    \lambda^h_j h^2 & = \frac{40 \sin^2(\omega_j h)}{15 + 24 \cos(\omega_j h) + \cos(2 \omega_j h) }, \\
    \lambda^h_j h^2 & = - 20  + \frac{200}{9 + \cos(\omega_j h)}, \\
  \end{aligned}
\end{equation}
which have the error representations  
\begin{equation} \label{eq:dreep2nep}
  \begin{aligned}
    \Lambda^h =  \Lambda + \frac{1}{60} \Lambda^2 + \mathcal{O}(\Lambda^3),\qquad
    \Lambda^h =  \Lambda - \frac{1}{30} \Lambda^2 + \mathcal{O}(\Lambda^3), 
  \end{aligned}
\end{equation}
respectively. These dispersion relations and errors near the boundary are inconsistent with~\eqref{eq:drep2nep} and~\eqref{eq:tep2nep} for the internal nodes. This inconsistency leads to the outliers in the approximate spectrum. The boundary penalization technique proposed in Section~\ref{sec:nep} recovers the consistency weakly. In the limiting case, when the penalty goes to infinity for $C^1$ quadratic elements, we recover consistency strongly.  In the limiting case, when the penalty goes to infinity, the penalized terms in~\eqref{eq:dcvfbfsnep} at the boundaries $x=0$ and $x=1$ become the conditions on the unknown vector components
\begin{equation} \label{eq:3nep}
  U_1 - U_2 = 0, \qquad U_{N+2} - U_{N+1} = 0.
\end{equation}

We apply these conditions to reduce the matrix eigenvalue problem~\eqref{eq:mevp} from dimension $(N+2) \times (N+2)$ to dimension $N\times N$. In particular, we add the first column to the second column, then add the first row to the second row. Similarly, we add the $(N+2)$-th column to the $(N+1)$-th column, then add the $(N+2)$-th row to the $(N+1)$-th row. Removing the first and last rows and columns, we obtain
\begin{equation} \label{eq:km2new}
  \begin{aligned}
    \tilde K & = \frac{1}{h}
    \begin{bmatrix}
      \frac{2}{3} & -\frac{1}{2} & -\frac{1}{6} \\[0.2cm]
      -\frac{1}{2} & 1 & -\frac{1}{3} & -\frac{1}{6} \\[0.2cm]
      -\frac{1}{6} & -\frac{1}{3} & 1 & -\frac{1}{3} & -\frac{1}{6} \\[0.2cm]
      & \ddots & \ddots & \ddots & \ddots & \ddots &  \\[0.2cm]
      & & -\frac{1}{6} & -\frac{1}{3} & 1 & -\frac{1}{3} & -\frac{1}{6} \\[0.2cm]
      & &  & -\frac{1}{6} & -\frac{1}{3} & 1 & -\frac{1}{2}  \\[0.2cm]
      &&& & -\frac{1}{6} & -\frac{1}{2} & \frac{2}{3}  \\
    \end{bmatrix}, \\
    \tilde M &= h
    \begin{bmatrix}
      \frac{23}{30} & \frac{9}{40} & \frac{1}{120} \\[0.2cm]
      \frac{9}{40} & \frac{11}{20} & \frac{13}{60} & \frac{1}{120} \\[0.2cm]
      \frac{1}{120} & \frac{13}{60} & \frac{11}{20} & \frac{13}{60} & \frac{1}{120} \\[0.2cm]
      & \ddots & \ddots & \ddots & \ddots & \ddots &  \\[0.2cm]
      & & \frac{1}{120} & \frac{13}{60} & \frac{11}{20} & \frac{13}{60} & \frac{1}{120} \\[0.2cm]
      & &  & \frac{1}{120} & \frac{13}{60} & \frac{11}{20} & \frac{9}{40} \\[0.2cm]
      &&& & \frac{1}{120} & \frac{9}{40} & \frac{23}{30}  \\
    \end{bmatrix},
  \end{aligned}
\end{equation}
which are of dimensions $N \times N$. Applying Theorem~\ref{thm:evab2}, we have the following result.
\begin{theorem}[Analytical solutions] \label{lem:p2}
  Let $\tilde{K}$ and $\tilde{M}$ be defined in~\eqref{eq:km2new}.  The problem $\tilde K U = \lambda^h \tilde M U$ has analytical eigenpairs $(\lambda_j^h, U_j )$ where
\begin{equation} \label{eq:evefp2nep}
\lambda^h_{j+1} = - 20N^2  + \frac{240N^2 \big(3 + 2 \cos(j \pi h) \big)}{33 + 26 \cos(j \pi  h) + \cos(2 j \pi  h) }, \quad U_{j,k} = c \cos\big(j\pi (k-\frac12) h \big)
\end{equation}
with $h = \frac{1}{N}, k =1,2,\cdots, N, j = 0,1,2, \cdots, N-1, 0 \ne c \in \mathbb{R}$. 
\end{theorem}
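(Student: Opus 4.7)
The plan is to reduce Theorem~\ref{lem:p2} to an application of Theorem~\ref{thm:evab2}. First I would read off the interior stencil of $\tilde K$ and $\tilde M$ in~\eqref{eq:km2new} to identify the parameters of the underlying Toeplitz matrices: for $\tilde K$, set $\mu_0 = 1/h$, $\mu_1 = -1/(3h)$, $\mu_2 = -1/(6h)$ (with $m=2$), and for $\tilde M$, set $\nu_0 = 11h/20$, $\nu_1 = 13h/60$, $\nu_2 = h/120$. With these choices, the matrices $G^{(\mu)}$ and $G^{(\nu)}$ defined by~\eqref{eq:G} reproduce all interior rows exactly.

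Next I would verify that the boundary modifications of $\tilde K$ and $\tilde M$ match the prescribed form $H^{(\xi)}_{j,k} = \xi_{j+k-1}$ from~\eqref{eq:H2}. This is the most delicate bookkeeping step, but it is a finite check involving only the four corner entries at each end of each matrix (since $m=2$ produces nonzero $H$ entries only in the $(1,1)$ and $(1,2)$ positions plus their persymmetric mirrors). For example, at the top-left corner of $\tilde K$ one verifies $G^{(\mu)}_{1,1} + H^{(\mu)}_{1,1} = \mu_0 + \mu_1 = (1 - 1/3)/h = 2/(3h)$ and $G^{(\mu)}_{1,2} + H^{(\mu)}_{1,2} = \mu_1 + \mu_2 = -1/(2h)$, both agreeing with~\eqref{eq:km2new}; the analogous check for $\tilde M$ uses $\nu_0 + \nu_1 = 23h/30$ and $\nu_1 + \nu_2 = 9h/40$. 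Persymmetry handles the bottom-right corner automatically, and all other corner entries are unchanged relative to $G$.

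Having matched the algebraic structure, Theorem~\ref{thm:evab2} immediately delivers the eigenpairs in the form $X_{j+1,k} = c\cos(j\pi(k-\tfrac12)h)$ with $h = 1/N$ and the eigenvalue given by the ratio $(\mu_0 + 2\mu_1\cos(j\pi h) + 2\mu_2\cos(2j\pi h))/(\nu_0 + 2\nu_1\cos(j\pi h) + 2\nu_2\cos(2j\pi h))$. I would finish by showing this ratio is algebraically identical to the expression in~\eqref{eq:evefp2nep}: after multiplying numerator and denominator by $60/h$ (which produces $1/h^2 = N^2$) the ratio becomes $N^2(60 - 40\cos(j\pi h) - 20\cos(2j\pi h))/(33 + 26\cos(j\pi h) + \cos(2j\pi h))$, and the identity $-20(33 + 26c + c_2) + 240(3 + 2c) = 60 - 40c - 20c_2$ (with $c = \cos(j\pi h)$, $c_2 = \cos(2j\pi h)$) shows this equals the stated formula. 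This is precisely the dispersion relation~\eqref{eq:drep2nep}, which is already how the interior rows were constructed, so no new computation is needed beyond confirming the boundary consistency.

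Finally, I would note that invertibility of $\tilde M$ — a hypothesis of Theorem~\ref{thm:evab2} — follows from the fact that $\tilde M$ is the mass matrix of the modified variational form, which is symmetric positive-definite (indeed, all its diagonal entries dominate in the usual sense and the formula already gives $N$ linearly independent eigenvectors $\{\cos(j\pi(k-\tfrac12)/N)\}_{j=0}^{N-1}$). I expect no serious obstacle; the main point is simply the clean algebraic match to Theorem~\ref{thm:evab2}, and the only place requiring care is the corner-entry verification.
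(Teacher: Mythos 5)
Your proposal is correct and follows essentially the same route as the paper, which obtains Theorem~\ref{lem:p2} precisely by matching the reduced matrices in~\eqref{eq:km2new} to the structure $A = G^{(\mu)} + H^{(\mu)}$, $B = G^{(\nu)} + H^{(\nu)}$ of Theorem~\ref{thm:evab2} (with $m=2$, $\mu$, $\nu$ read from the interior stencils) and then rewriting the resulting ratio as the dispersion relation~\eqref{eq:drep2nep}; your corner-entry checks and the algebraic identity $-20(33+26c+c_2)+240(3+2c)=60-40c-20c_2$ are exactly the needed verifications. The only cosmetic point is that~\eqref{eq:H2} with $m=2$ also modifies the $(2,1)$ entry (not just $(1,1)$ and $(1,2)$ plus persymmetric mirrors), but since both matrices are symmetric your check of the $(1,2)$ entry covers it.
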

 
Similarly, as for the $C^2$ cubic elements for the Dirichlet eigenvalue problem, the corresponding dispersion relations for both the internal and boundary nodes are consistent, and outliers are eliminated. These analytical eigenvector components lead to optimal eigenfunction errors.

\section{Numerical examples} \label{sec:num}

In this section, we present numerical simulations of problem~\eqref{eq:pde} in 1D, 2D, and 3D using isogeometric elements. Once we solve the eigenvalue problem, we sort the discrete eigenvalues in ascending order and pair them with the exact eigenvalues. We focus on the numerical approximation properties of the eigenvalues. In the case of 1D, however, we also report the eigenfunction (EF) errors in $H^1$ semi-norm (energy norm). To have comparable scales, we collect the relative eigenvalue errors and scale the energy norm by the corresponding eigenvalues~\cite{ puzyrev2017dispersion, calo2019dispersion}. The relative eigenvalue errors are 
\begin{equation}
e_{\lambda_j} = \frac{  \lambda^h_j - \lambda_j }{\lambda_j}, \qquad \tilde e_{\lambda_j} = \frac{\tilde \lambda^h_j - \lambda_j }{\lambda_j}.
\end{equation}
From the minimax principle~\cite{ strang1973analysis}, $e_{\lambda_j} >0$.  To see the trend after removing the outliers, we do not apply the absolute values of the eigenvalue errors. We define the scaled eigenfunction error in the energy norm as
\begin{equation}
e_{u_j} = \frac{  |u_j - u_j^h|_{1,\Omega} }{\lambda_j}, \qquad \tilde e_{u_j} = \frac{  |u_j - \tilde u_j^h|_{1,\Omega} }{\lambda_j}.
\end{equation}

\subsection{Dirichlet eigenvalue problem}
\subsubsection{Numerical study in 1D}

We consider $\Omega = [0, 1]$. The one dimensional differential eigenvalue problem~\eqref{eq:pde}  has the following explicit eigenvalues and eigenfunctions 
\begin{equation*}
\lambda_j = j^2 \pi^2, \quad \text{and} \quad u_j = \sqrt{2} \sin( j\pi x), \quad j = 1, 2, \cdots,
\end{equation*}
respectively. 

\begin{figure}[h!]
\centering
\includegraphics[height=8cm]{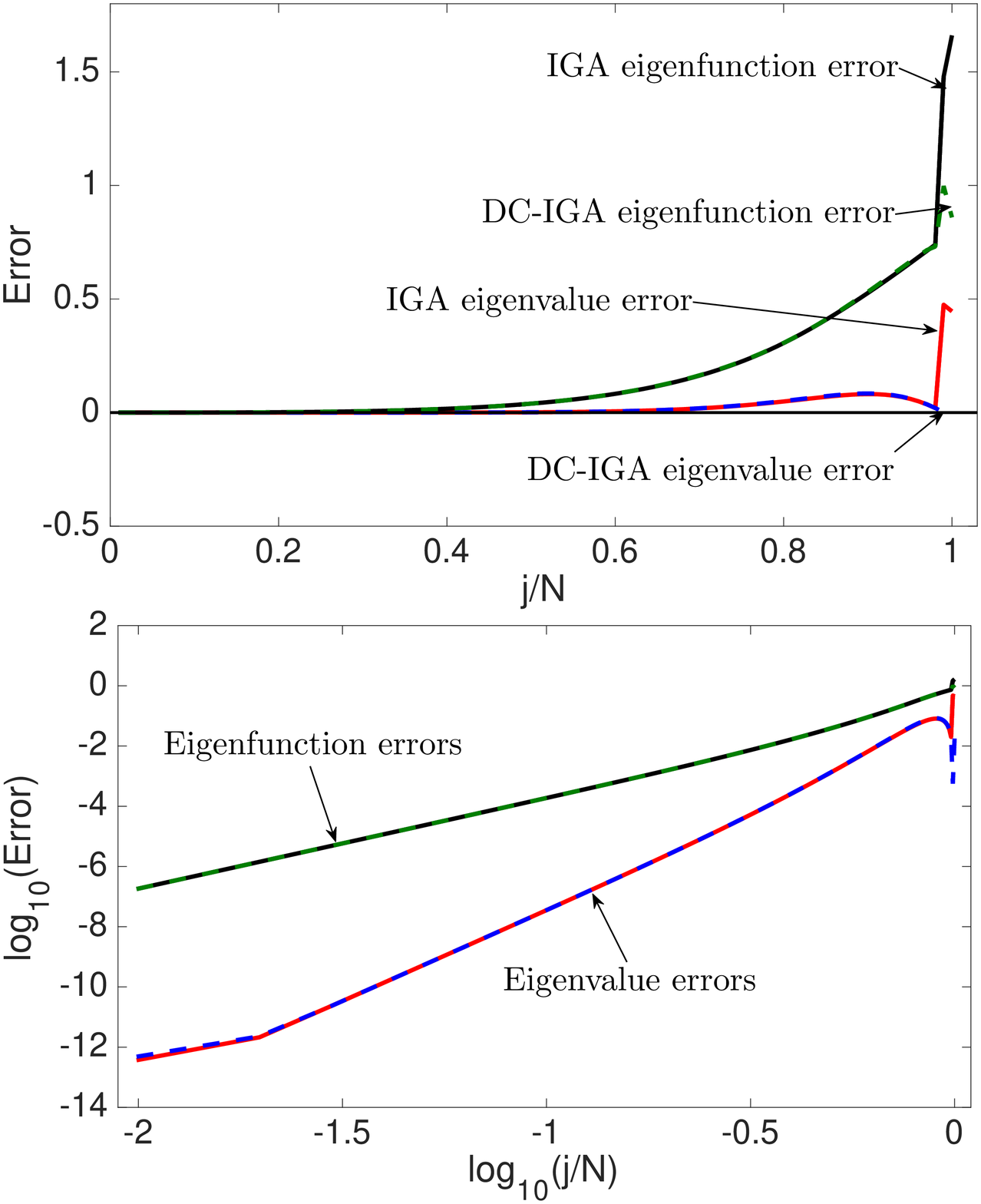} 
\includegraphics[height=8cm]{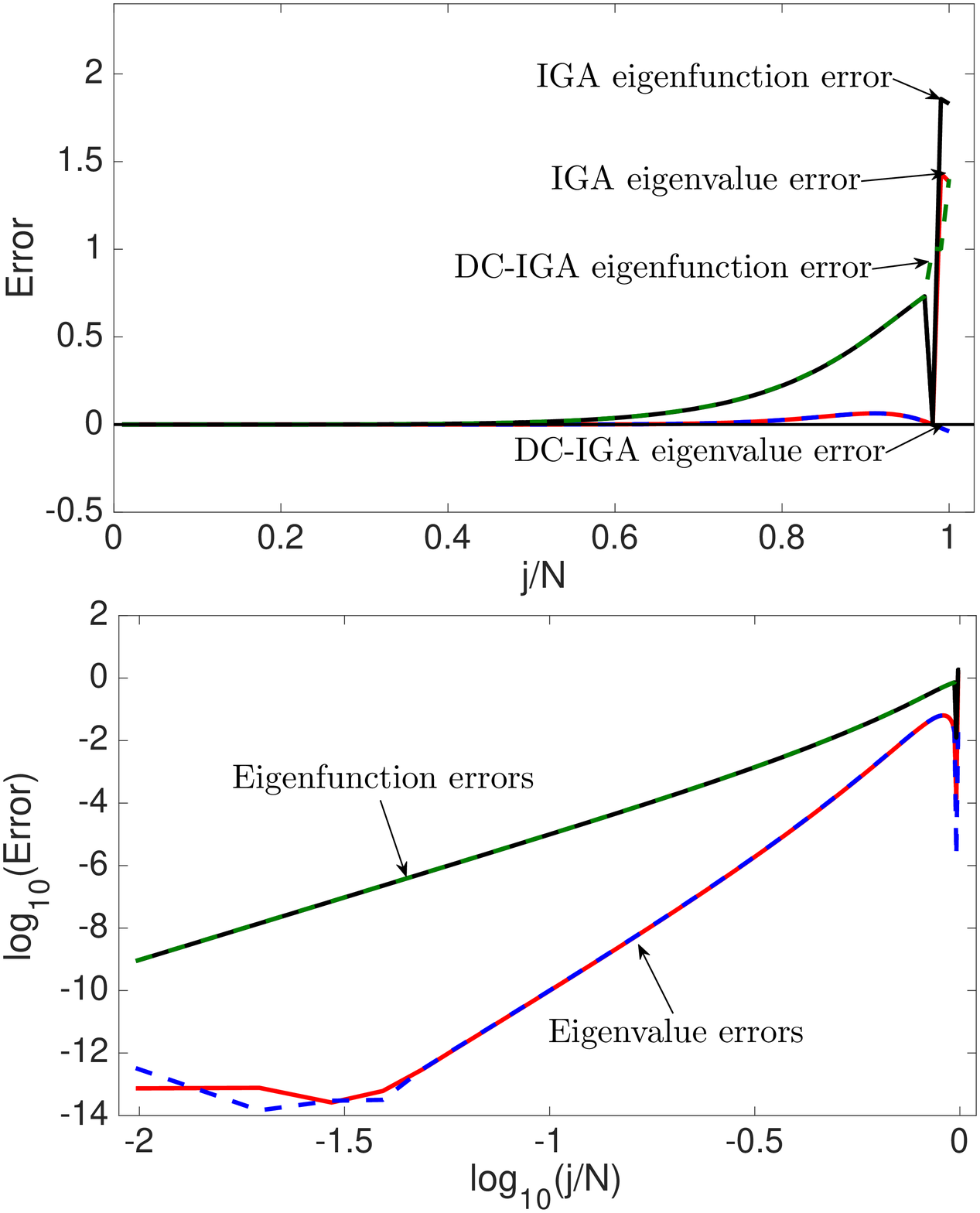} 
\caption{Eigenvalue and eigenfunction errors when using $C^2$ cubic  (left) and  $C^3$ quartic (right) elements with $N=100$ in 1D.}
\label{fig:iga1dp3n100e1}
\end{figure}
\begin{figure}[h!]
\centering
\includegraphics[height=8cm]{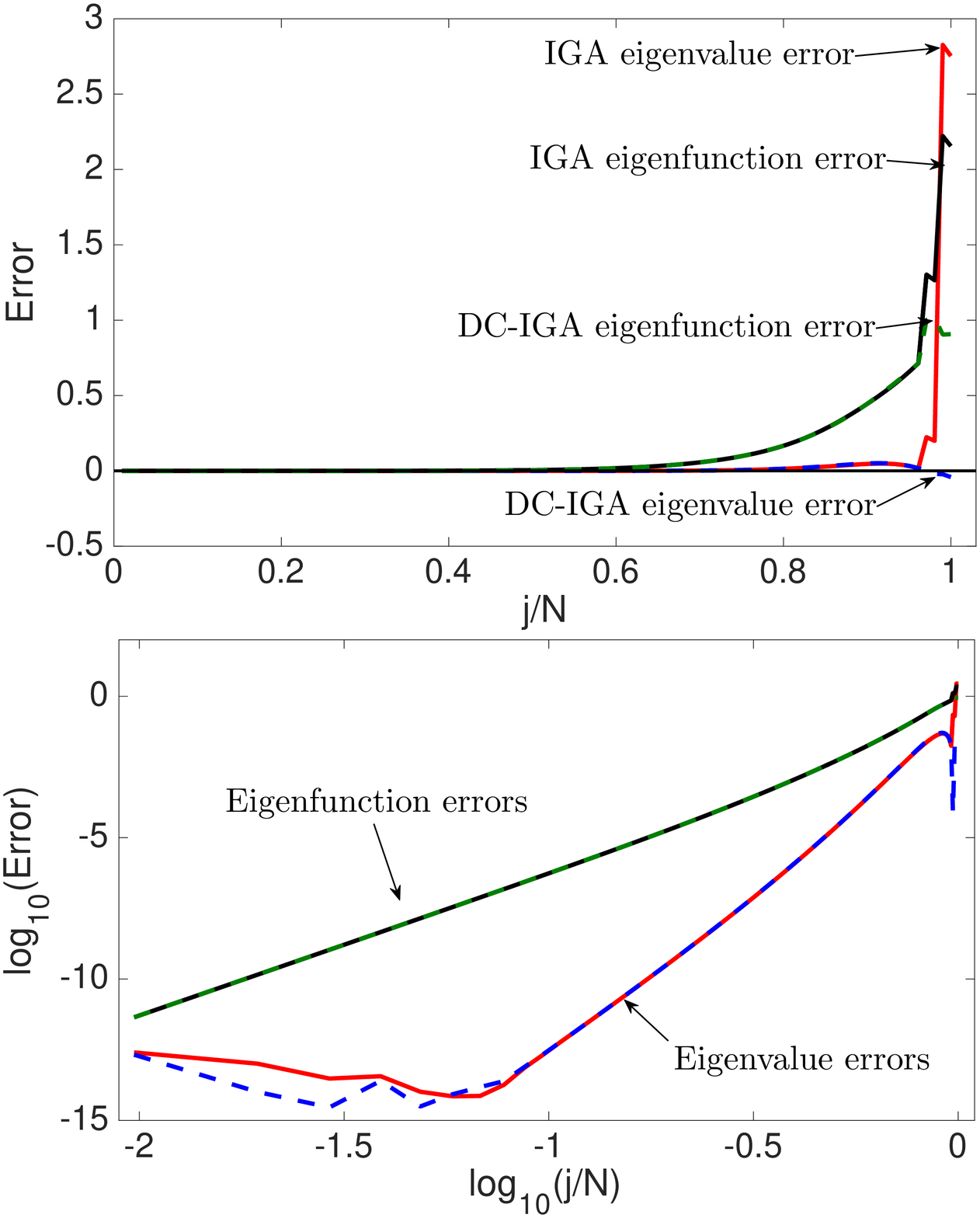} 
\includegraphics[height=8cm]{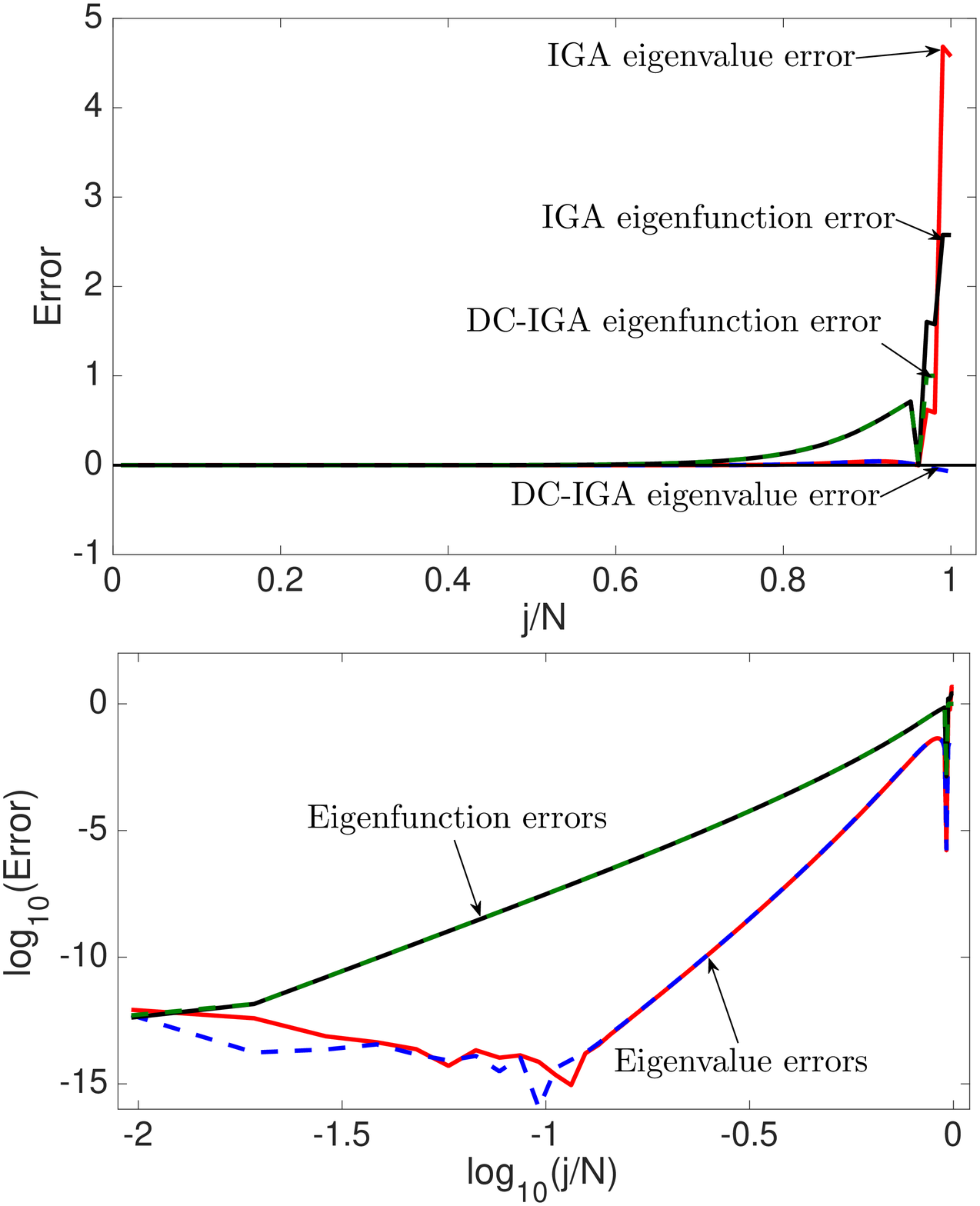} 
\caption{Eigenvalue and eigenfunction errors when using $C^4$ quintic  (left) and $C^5$ sextic (right) elements with $N=100$ in 1D.}
\label{fig:iga1dp5n100e1}
\end{figure}

Figures~\ref{fig:iga1dp3n100e1} and~\ref{fig:iga1dp5n100e1} show the relative eigenvalue and eigenfunction errors in energy norm errors of $C^2$ cubic, $C^3$ quartic, $C^4$ quintic, and $C^5$ sextic isogeometric approximations using the boundary penalization, respectively. In all the cases, we use $N=100$ uniform elements.  We set the penalty parameters to the default value of 1 in all cases. The solid lines show the original error curves of the isogeometric elements, while the dotted lines show the discretely corrected errors. In all the cases, the consistent boundary penalization removes outliers from the spectrum.  Table~\ref{tab:err1d} shows the optimal error convergence rates of approximate eigenvalues and eigenfunctions by the discrete correction. 

For cubic and quartic elements, our experiments show that the boundary penalization with slightly larger penalty parameters (in the scale of $\mathcal{O}(h^{-1})$) also removes the outliers while maintaining accuracy in the lower-frequency region. If the penalty parameters are large (in the scale of $\mathcal{O}(h^{-2})$), then we lose accuracy in the lower-frequency region. We observe similar behavior for quintic and sextic elements.

\begin{table}[ht]
\centering 
\begin{tabular}{| c| c | c  c c || c  c c | ccc|}
\hline
$p$ &  $N$ & $\frac{|\lambda_1^h - \lambda_1|}{\lambda_1}$  & $|u_1^h - u_1|_{H^1}$ & $\|u_1^h - u_1\|_{L^2}$ &  $\frac{|\lambda_6^h - \lambda_6|}{\lambda_6}$  & $|u_6^h - u_6|_{H^1}$ & $\|u_6^h - u_6\|_{L^2}$  \\[0.1cm] \hline

& 8& 	1.31E-07& 	1.14E-03& 	2.31E-05& 	2.99E-02& 	4.06E+00& 	1.29E-01 \\[0.1cm]
& 16& 	1.93E-09& 	1.38E-04& 	1.38E-06& 	1.60E-04& 	2.45E-01& 	2.91E-03 \\[0.1cm]
3 & 32& 	2.98E-11& 	1.71E-05& 	8.48E-08& 	1.63E-06& 	2.42E-02& 	1.27E-04 \\[0.1cm]
& 64& 	5.61E-13& 	2.14E-06& 	5.28E-09& 	2.25E-08& 	2.83E-03& 	7.11E-06 \\[0.1cm] \hline
& $\rho_3$& 	5.95& 	3.02& 	4.03& 	6.76& 	3.48& 	4.7 \\[0.1cm] \hline

& 8& 	1.76E-07& 	1.32E-03& 	6.09E-05& 	1.49E-01& 	1.09E+01& 	4.37E-01 \\[0.1cm]
4& 16	& 3.22E-10& 	5.47E-05& 	1.24E-06& 	4.49E-04& 	4.31E-01& 	1.00E-02 \\[0.1cm]
& 32& 	5.26E-13& 	2.06E-06& 	2.31E-08& 	8.70E-07& 	1.61E-02& 	1.81E-04 \\[0.1cm] \hline
& $\rho_4$& 	9.17& 	4.66& 	5.68& 	8.69& 	4.70& 	5.62 \\[0.1cm] \hline
 
 \end{tabular}
\caption{Relative eigenvalue and eigenfunction errors for $C^2$-cubic and $C^3$-quartic elements in 1D.}
\label{tab:err1d} 
\end{table}

\subsubsection{Numerical study in 2D} \label{sec:num2d}

Let $\Omega = [0, 1] \times [0, 1]$.  The two dimensional differential eigenvalue problem~\eqref{eq:pde} has exact eigenvalues and eigenfunctions 
\begin{equation*}
\lambda_{jk} = ( j^2 + k^2 ) \pi^2, \quad \text{and} \quad u_{jk} = 2 \sin( j\pi x)\sin( k\pi y), \quad j,k = 1, 2, \cdots,
\end{equation*}
respectively. Figures~\ref{fig:iga2dp3n40} and~\ref{fig:iga2dp5n40} show the relative eigenvalue errors of $C^2$ cubic, $C^3$ quartic, $C^4$ quintic, and $C^5$ sextic isogeometric elements, respectively. We build the basis functions on a uniform mesh with $40\times40$ elements.  We set the penalty parameters to 1 as a default. This penalization removes all outliers. More importantly, the corrected relative eigenvalue error curves remain almost flat when compared to the original ones. That is, the overall spectral approximation improves significantly. For brevity, we do not include a full set of 3D results, but the performance is similar; see the left plot in Figure~\ref{fig:iga3d} for an example of $C^3$ quartic isogeometric analysis in 3D. Lastly, the right plot in Figure~\ref{fig:iga3d} shows the comparison of the eigenvalue errors when using a non-uniform mesh with $20\times20\times20$ elements in 3D. The mesh is created using the randomly generated 1D grid with nodes at $x=$ 0, 0.0475, 0.0492, 0.1555, 0.1961, 0.2461, 0.2939, 0.3274, 0.4039, 0.4379, 0.4834, 0.5607, 0.6022, 0.6276, 0.7009, 0.7252, 0.8128, 0.8729, 0.8997, 0.9632, 1.  We observe that the eigenvalue errors in the high-frequency regions are reduced.  

\begin{figure}[h!]
\centering
\includegraphics[height=8cm]{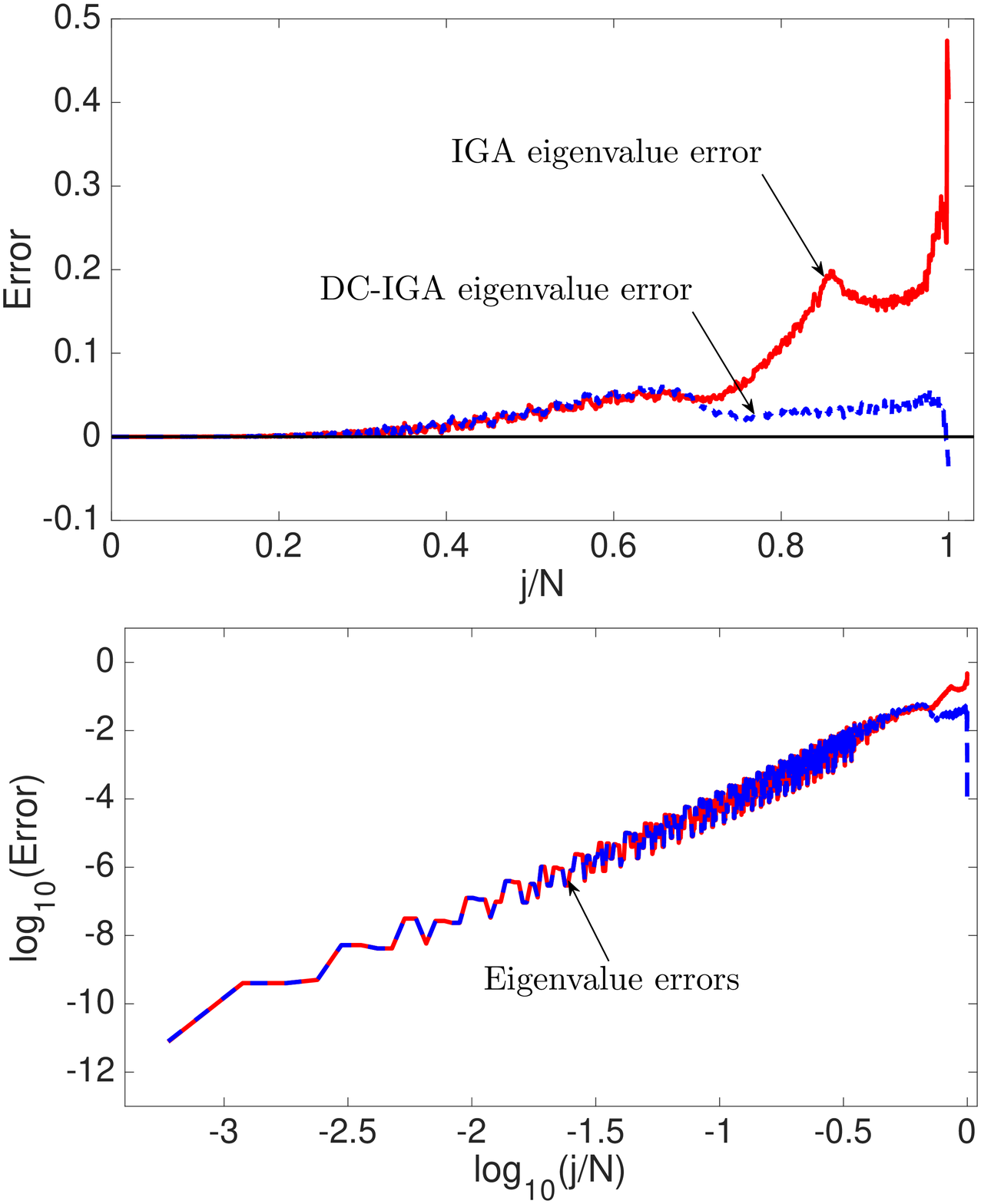} 
\includegraphics[height=8cm]{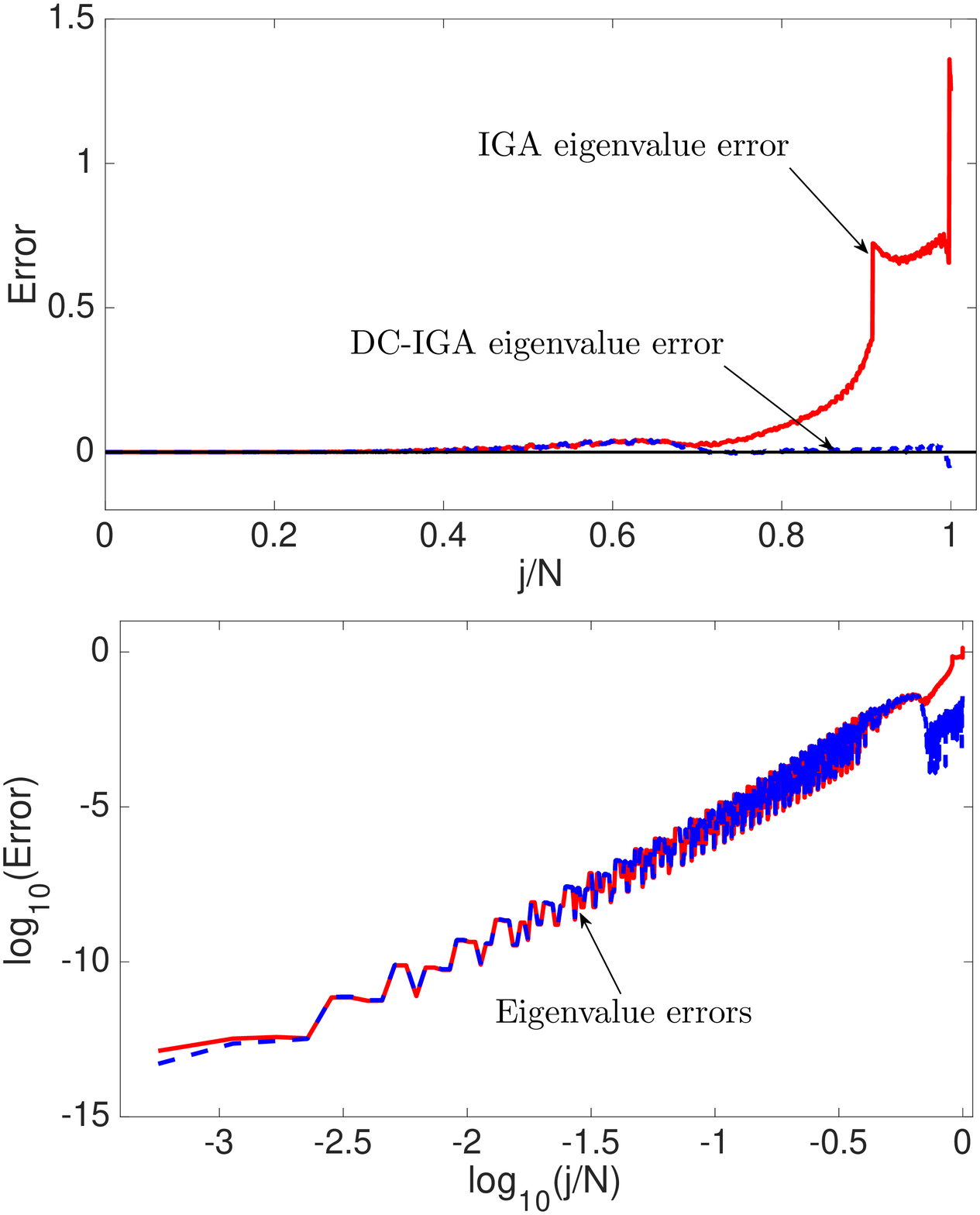} 
\caption{Eigenvalue errors when using $C^2$ cubic  (left) and $C^3$ quartic (right) isogeometric elements with $N=40\times40$ in 2D.}
\label{fig:iga2dp3n40}
\end{figure}

\begin{figure}[h!]
\centering
\includegraphics[height=8cm]{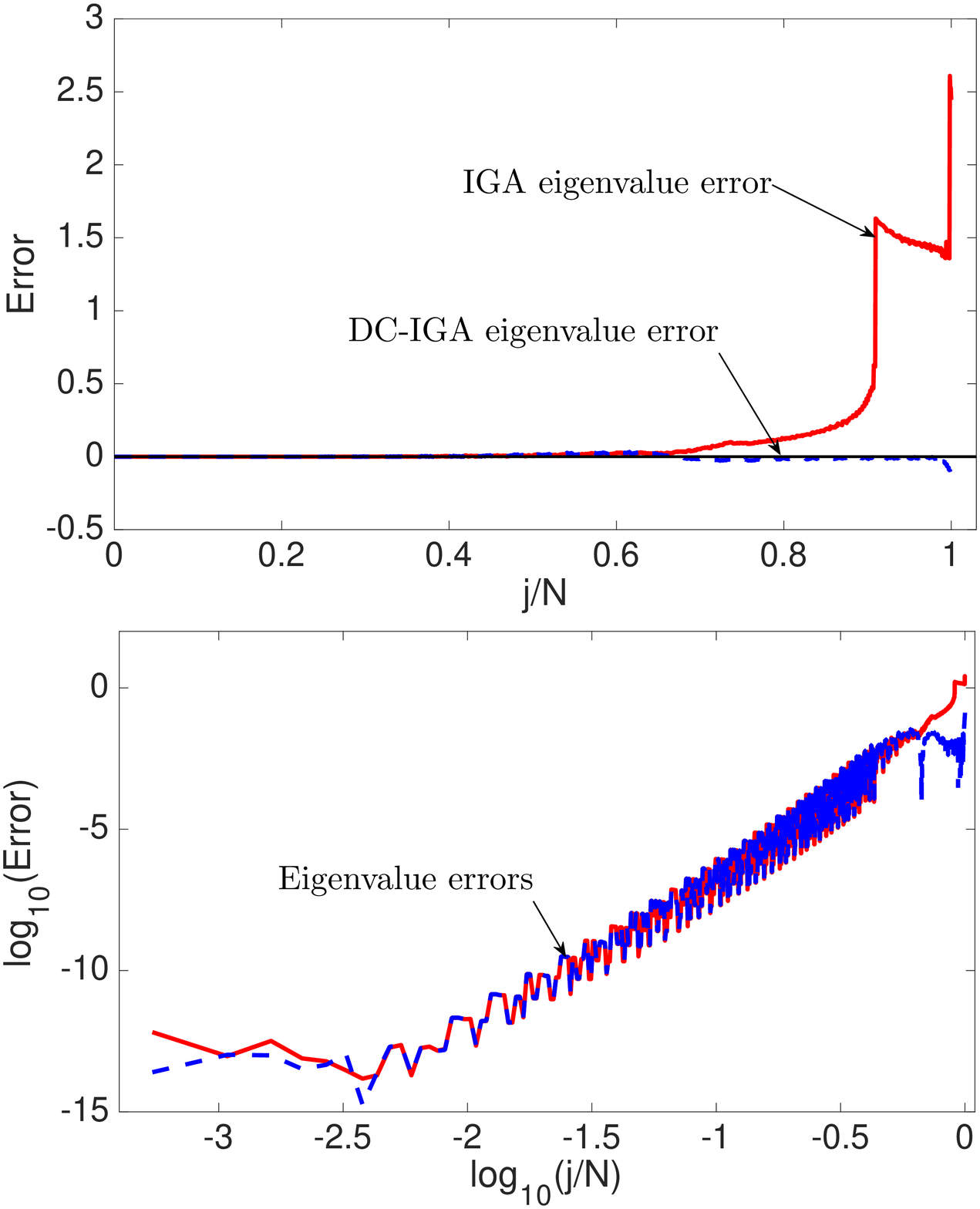} 
\includegraphics[height=8cm]{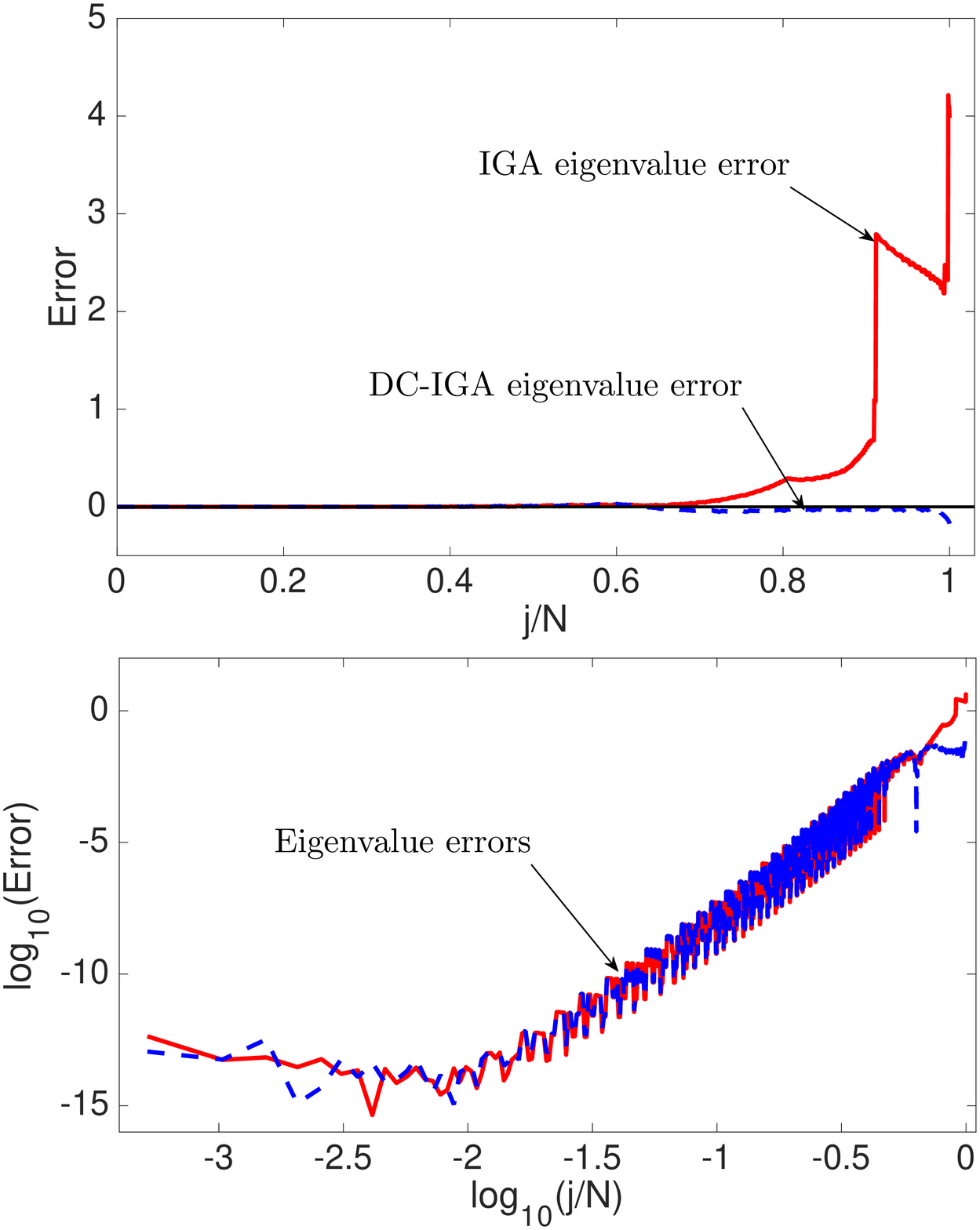} 
\caption{Eigenvalue errors when using $C^4$ quintic  (left) and $C^5$ sextic (right) isogeometric elements with $N=40\times40$ in 2D.}
\label{fig:iga2dp5n40}
\end{figure}

\begin{figure}[h!]
\hspace{-1cm}
\includegraphics[height=6cm]{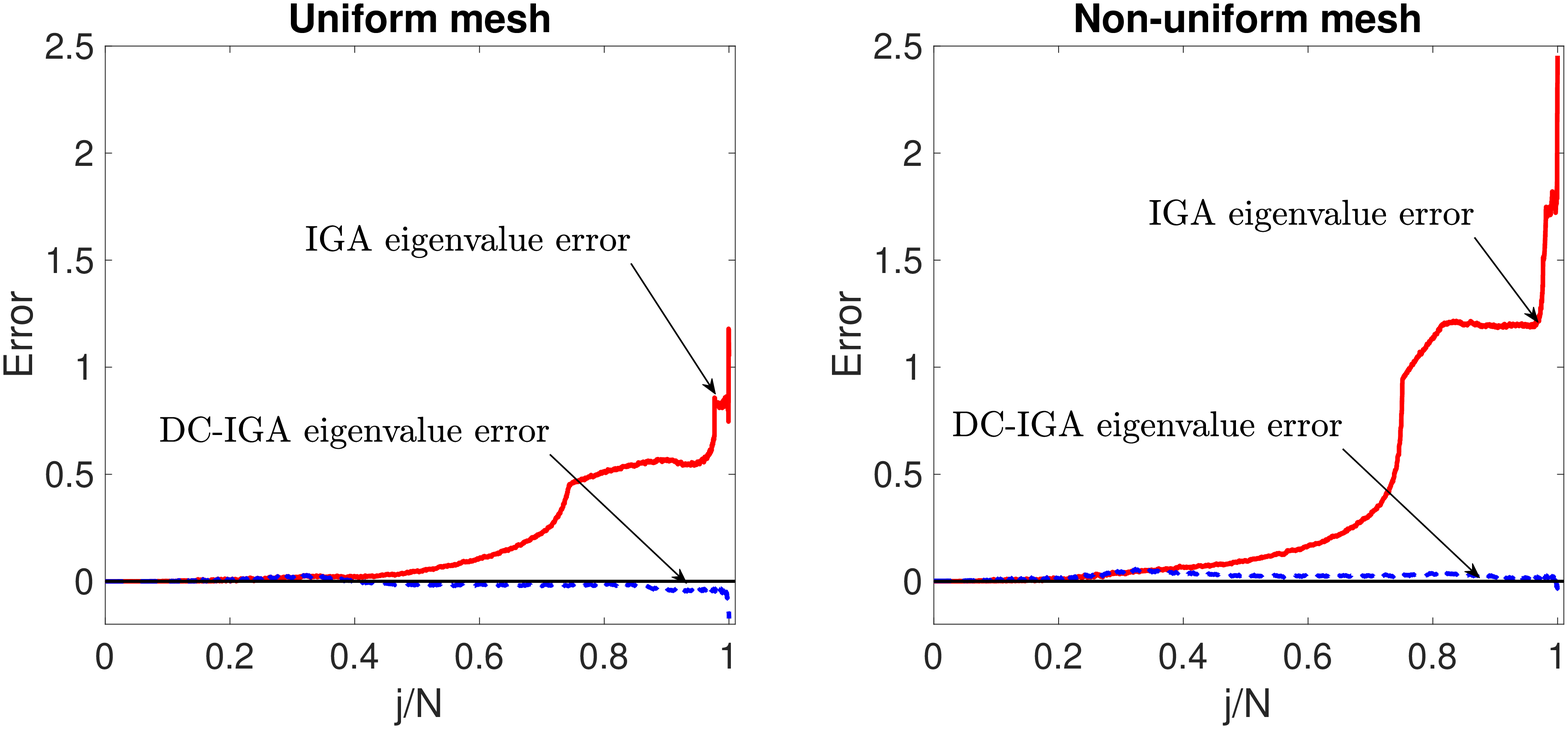} 
\caption{Eigenvalue errors when using $C^3$ quartic isogeometric analysis with $N=20\times20\times20$ uniform elements (left) and non-uniform elements in 3D. The non-uniform grid is generated randomly in 1D and then the 3D mesh is constructed based on the tensor-product.}
\label{fig:iga3d}
\end{figure}

\subsection{Neumann eigenvalue problem} 

For simplicity, we report the results for 2D Neumann eigenvalue problem~\eqref{eq:nep}.  Figures~\ref{fig:iga2dp2n40nep} and~\ref{fig:iga2dp4n40nep} show the relative eigenvalue errors of $C^1$ quadratic, $C^2$ cubic, $C^3$ quartic, and $C^4$ quintic isogeometric elements, respectively. We use a uniform mesh with $40\times40$ elements.  We set the penalty parameters to 1. The boundary penalization removes all the outliers, and the eigenvalue error curves are almost flat compared to the original ones. 

\begin{figure}[h!]
\centering
\includegraphics[height=8cm]{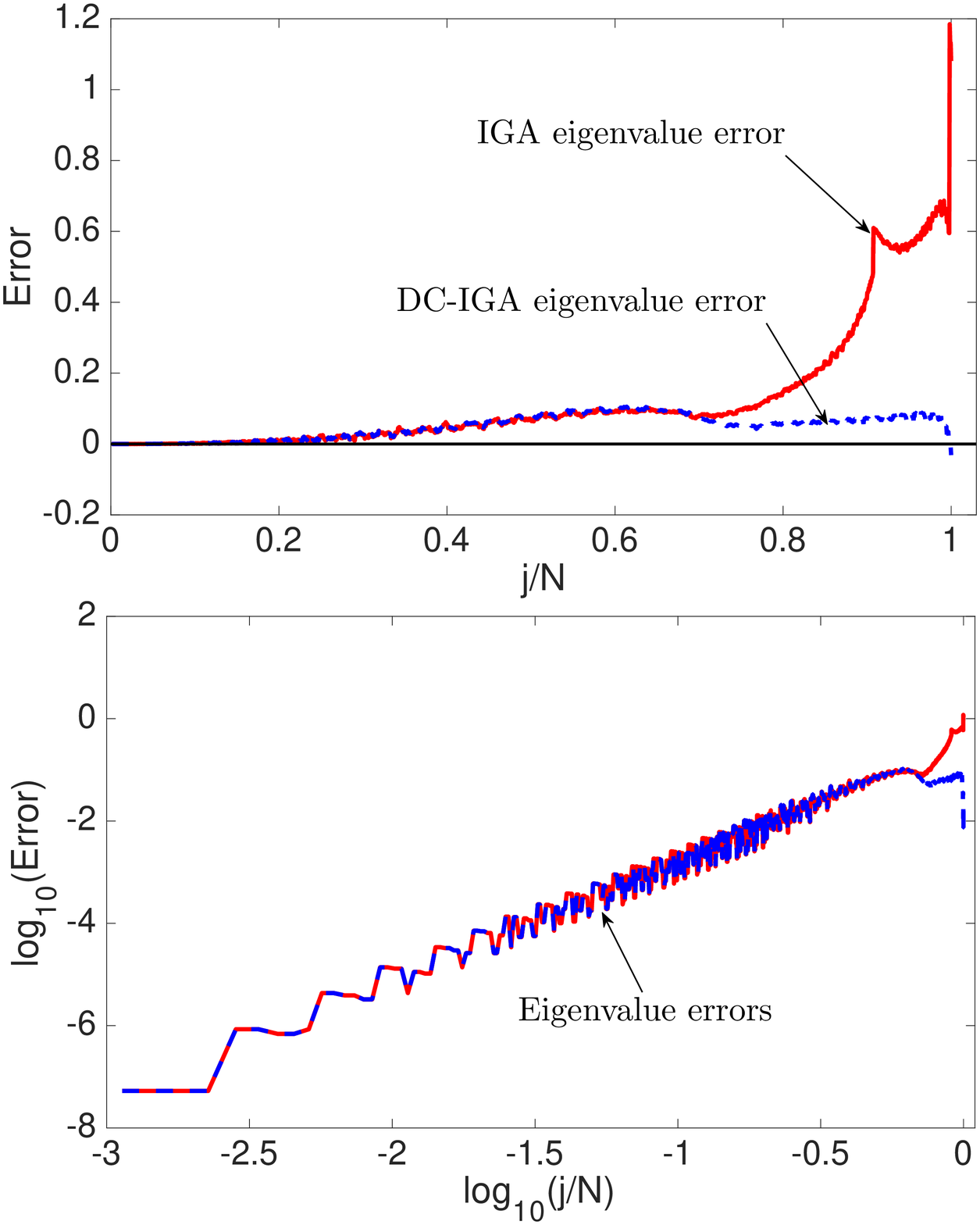} 
\includegraphics[height=8cm]{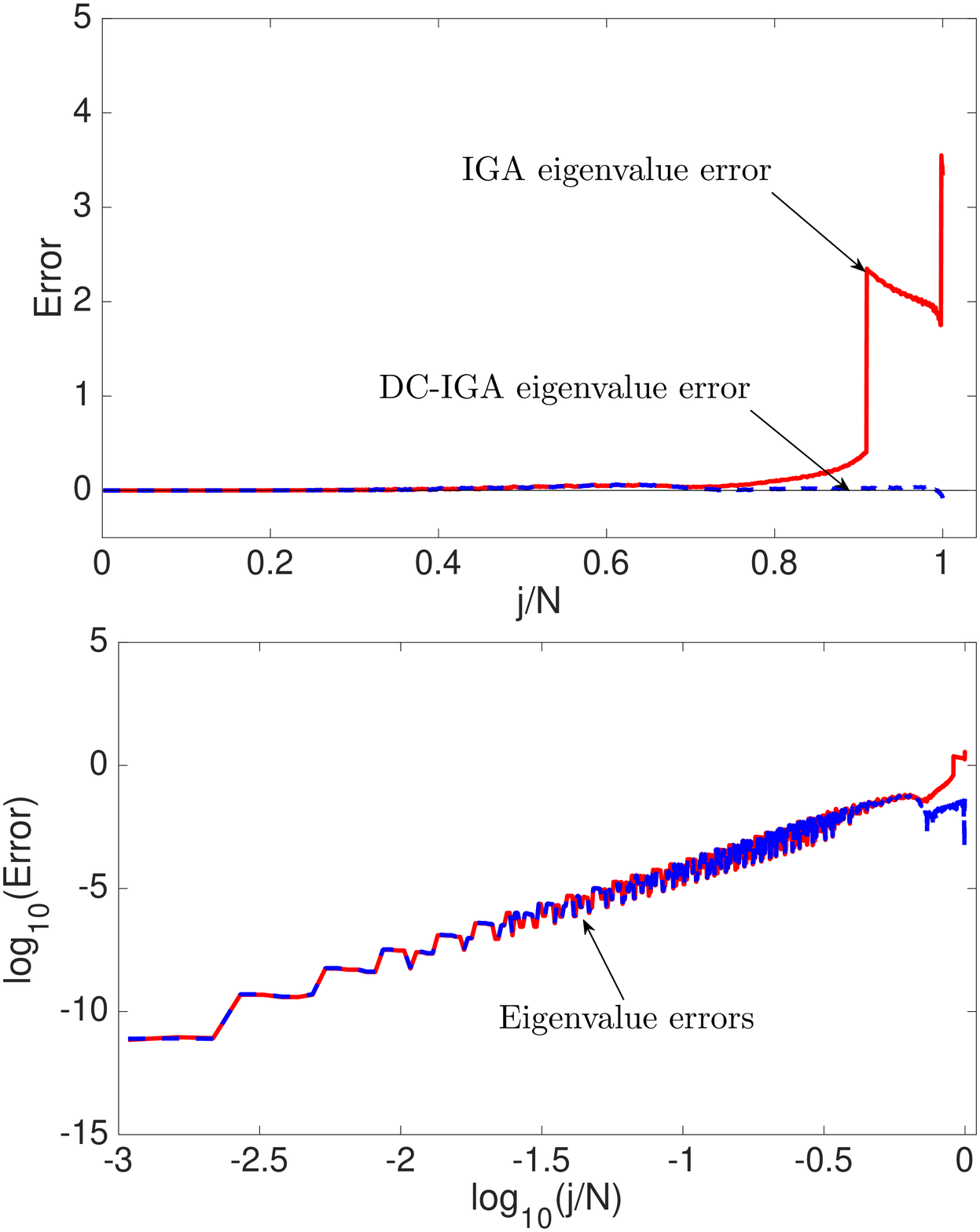} 
\caption{Eigenvalue errors of the Neumann eigenvalue problem when using $C^1$ quadratic   (left) and $C^2$ cubic (right) isogeometric elements with $N=40\times40$ in 2D.}
\label{fig:iga2dp2n40nep}
\end{figure}
\begin{figure}[h!]
\centering
\includegraphics[height=8cm]{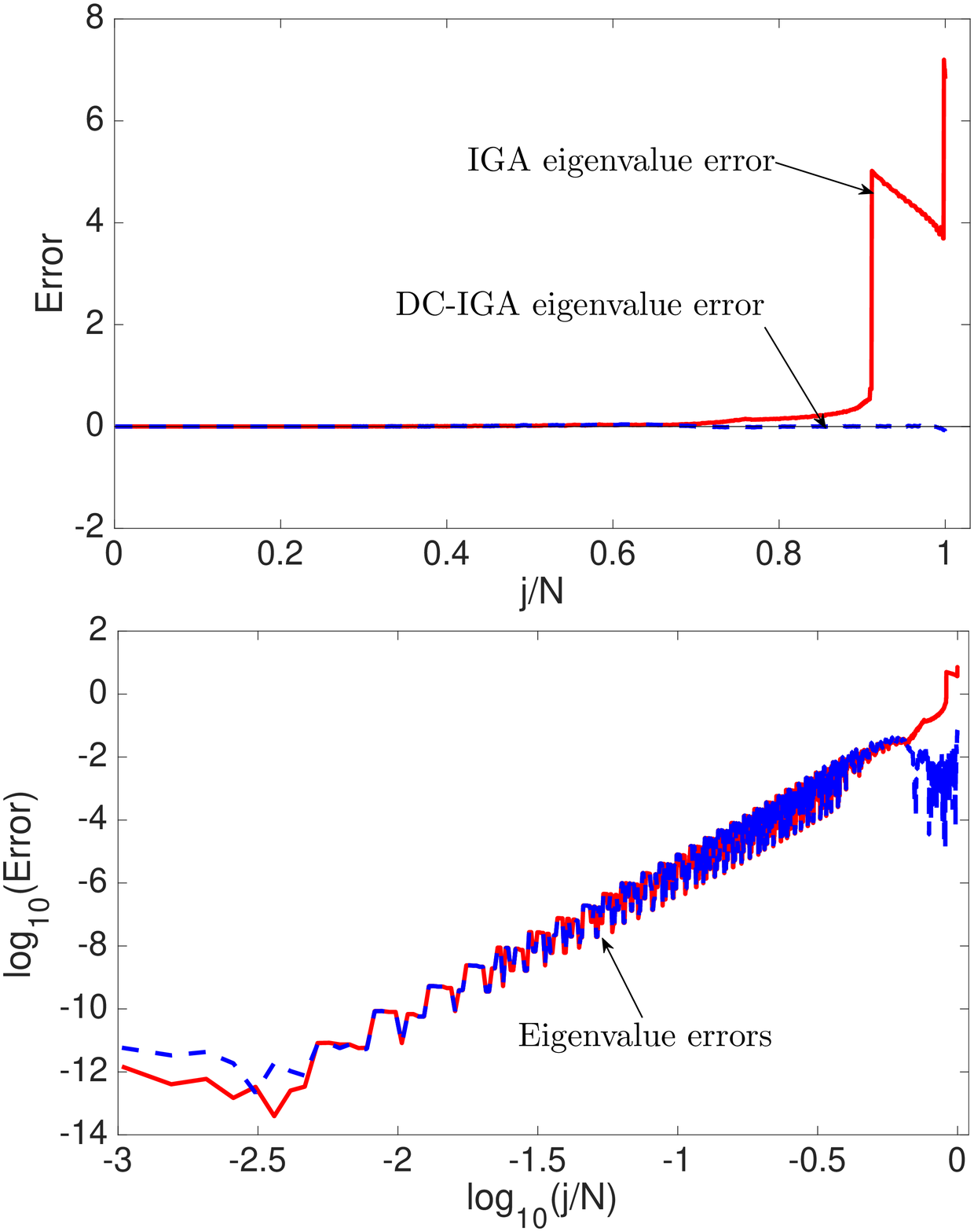} 
\includegraphics[height=8cm]{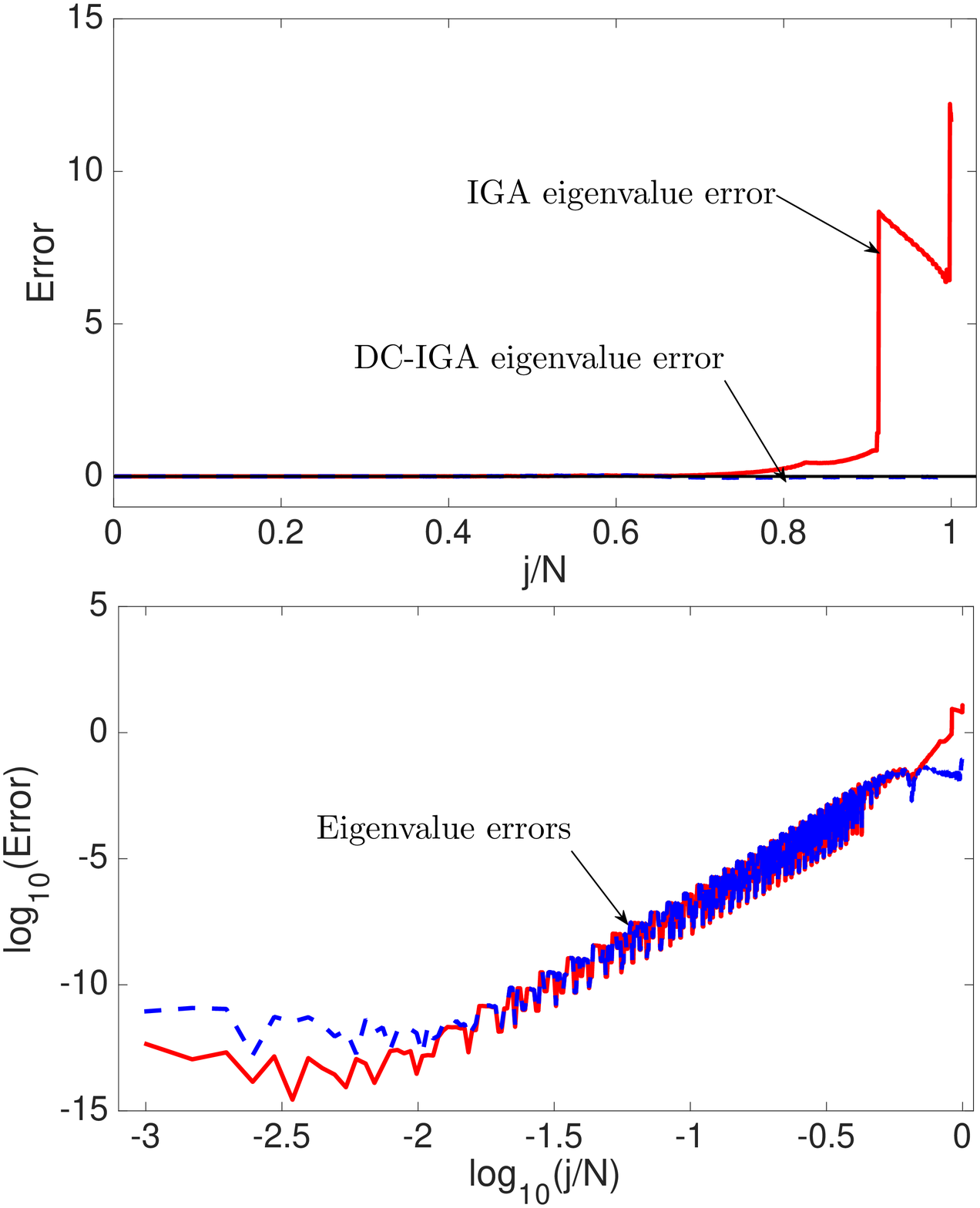} 
\caption{Eigenvalue errors  of the Neumann eigenvalue problem when using $C^3$ quartic  (left) and $C^4$ quintic (right) isogeometric elements with $N=40\times40$ in 2D.}
\label{fig:iga2dp4n40nep}
\end{figure}

\subsection{Numerical study on condition numbers}

We now study the condition numbers to show further advantages of the proposed method.  Due to the symmetry of the stiffness and mass matrices, the condition numbers of the generalized matrix eigenvalue problems~\eqref{eq:mevp} and~\eqref{eq:mevp1d} are given by
\begin{equation}
\gamma := \frac{\lambda^h_{\max}}{\lambda^h_{\min}}, \qquad \tilde \gamma := \frac{\tilde \lambda^h_{\max}}{\tilde \lambda^h_{\min}},
\end{equation}
where $\lambda^h_{\max}, \tilde \lambda^h_{\max}$ are the largest eigenvalues and $\lambda^h_{\min}, \tilde \lambda^h_{\min}$ are the smallest eigenvalues.  The condition number characterizes the stiffness of the system.  We follow the recent work of soft-finite element method (softFEM)~\cite{ deng2020softfem} and define the \textit{condition number reduction ratio} of the method with respect to IGA as
\begin{equation} \label{eq:srr}
\rho := \frac{\gamma}{\tilde\gamma} = \frac{\lambda^h_{\max} }{\tilde \lambda^h_{\max} } \cdot \frac{\tilde \lambda^h_{\min} }{\lambda^h_{\min}}.
\end{equation}
In general, one has $\lambda^h_{\min} \approx \tilde \lambda^h_{\min}$ for IGA and the proposed method with a sufficient number of elements (in practice, these methods with only a few elements already lead to good approximations to the minimum eigenvalues).  Thus, the condition number reduction ratio is mainly characterized by the ratio of the largest eigenvalues.  Finally, we define the \textit{condition number reduction percentage} as
\begin{equation}
\varrho = 100 \frac{\gamma-\tilde \gamma}{\gamma}\,\% = 100(1-\rho^{-1}) \, \%.
\end{equation} 

Table~\ref{tab:cond} shows the smallest and largest eigenvalues, condition numbers and their reduction ratios and percentages for 1D, 2D, and 3D problems.  We observe that the condition numbers of the proposed method are significantly smaller.  The condition number of the proposed method reduces by about 32\% for $C^2$ cubic, 60\% for $C^3$ quartic, 75\% for $C^4$ quintic, and 83\% for $C^5$ sextic elements in 1D, 2D, and 3D.  Similar stiffness reductions are observed in~\cite{ deng2020softfem} for softFEM and in~\cite{ deng2021outlier} for IGA with optimally-blended quadrature rules and discrete corrections.

\begin{table}[ht]
\centering 
\begin{tabular}{|c| c | ccc | ccc c| cc |}
\hline
$d$ & $p$ & $\lambda_{\min}^h$ &  $\lambda_{\max}^h$ & $ \tilde \lambda_{\max}^h $ &  $\gamma$ &  $\tilde \gamma$ & $\rho$ & $\varrho$ \\[0.1cm] \hline
& 	3& 	9.87& 	5.82E+05& 	3.95E+05& 	5.90E+04& 	4.00E+04& 	1.47& 	32.13\% \\[0.1cm]
1& 	4& 	9.87& 	9.80E+05& 	3.95E+05& 	9.93E+04& 	4.00E+04& 	2.48& 	59.69\% \\[0.1cm]
& 	5& 	9.87& 	1.57E+06& 	4.16E+05& 	1.59E+05& 	4.22E+04& 	3.78& 	73.52\% \\[0.1cm]
& 	6& 	9.87& 	2.38E+06& 	3.99E+05& 	2.41E+05& 	4.05E+04& 	5.96& 	83.22\% \\[0.1cm] \hline
& 	3& 	19.74& 	2.91E+05& 	1.98E+05& 	1.47E+04& 	1.00E+04& 	1.47& 	32.16\% \\[0.1cm]
2& 	4& 	19.74& 	4.90E+05& 	1.97E+05& 	2.48E+04& 	1.00E+04& 	2.48& 	59.69\% \\[0.1cm]
& 	5& 	19.74& 	7.86E+05& 	2.01E+05& 	3.98E+04& 	1.02E+04& 	3.91& 	74.45\% \\[0.1cm]
& 	6& 	19.74& 	1.19E+06& 	1.98E+05& 	6.03E+04& 	1.00E+04& 	6.01& 	83.36\% \\[0.1cm] \hline
& 	3& 	29.61& 	1.09E+05& 	7.41E+04& 	3.69E+03& 	2.50E+03& 	1.47& 	32.16\% \\[0.1cm]
3& 	4& 	29.61& 	1.84E+05& 	7.40E+04& 	6.20E+03& 	2.50E+03& 	2.48& 	59.69\% \\[0.1cm]
& 	5& 	29.61& 	2.95E+05& 	7.44E+04& 	9.95E+03& 	2.51E+03& 	3.96& 	74.76\% \\[0.1cm]
& 	6& 	29.61& 	4.46E+05& 	7.41E+04& 	1.51E+04& 	2.50E+03& 	6.02& 	83.40\% \\[0.1cm] \hline
 \end{tabular}
\caption{Minimal and maximal eigenvalues, condition numbers, reduction ratios and percentages when using IGA and DC-IGA. The polynomial degrees are $p\in\{3,4,5,6\}$. There are 200, $100\times 100$,  and $50 \times 50 \times 50$ elements in 1D, 2D, and 3D, respectively.}
\label{tab:cond} 
\end{table}

\section{Concluding remarks} \label{sec:conclusion} 

We remove the outliers from the discrete spectrum of isogeometric analysis. The idea is to penalize the boundary elements with extra information from the higher-order derivatives. By removing the outliers, we improve the overall spectrum significantly, especially in multiple dimensions. Consequently, we improve the approximate eigenfunctions in the high-frequency regions. The methods add penalizations to the discrete systems only for the degrees of freedom near the boundaries; thus, the boundary penalization implementation is simple to perform in existing isogeometric analysis codes. Our numerical simulation also shows that the technique works equally well for both the Dirichlet and Neumann eigenvalue problems when using the open knots and non-open knots. However, for curved boundary and domains with geometrical mappings, a simple direct implementation does not improve much the high-frequency eigenvalue errors, and further study on the boundary penalty terms is necessary.

Future work will analyze the dispersion error for higher-order elements to characterize the approximate eigenvalue errors analytically and remove the outliers by imposing strong boundary conditions, which reconstructs the basis functions near the boundary elements.  We will also analyze the impact of the outlier-removal techniques on time marching for time-dependent partial differential equations, especially the effects of high-frequency error pollutions in the time-marching schemes.  From the study on stiffness reduction in Section 5.3, we expect to have a larger critical time step allowing for faster explicit in time isogeometric analysis of engineering problems.

\section*{Acknowledgments}

The authors thank professor Thomas J.R. Hughes (UT Austin) for several discussions and comments which led to an improvement of the paper.  This publication was also made possible in part by the CSIRO Professorial Chair in Computational Geoscience at Curtin University and the Deep Earth Imaging Enterprise Future Science Platforms of the Commonwealth Scientific Industrial Research Organisation, CSIRO, of Australia. This project has received funding from the European Union's Horizon 2020 research and innovation program under the Marie Sklodowska-Curie grant agreement No 777778 (MATHROCKS). The Curtin Corrosion Centre and the Curtin Institute for Computation kindly provide ongoing support.


\bibliography{igaref}


\end{document}